\DeclareSymbolFontAlphabet{\mathbb}{AMSb}
\DeclareSymbolFontAlphabet{\mathbbl}{bbold}
\numberwithin{equation}{section}
\theoremstyle{definition}
\newtheorem* {theorem*}{Theorem}
\newtheorem* {corollary*}{Corollary}
\newtheorem* {conjecture*}{Conjecture}
\newtheorem{theorem}{Theorem}[section]
\theoremstyle{definition}
\newtheorem* {example*}{Example}
\newtheorem{lemma}[theorem]{Lemma}
\theoremstyle{definition}
\newtheorem{definition}[theorem]{Definition}
\theoremstyle{definition}
\newtheorem{proposition}[theorem]{Proposition}
\newtheorem*{remark*}{Remark}
\theoremstyle{definition}
\newtheorem {example}[theorem]{Example}
\theoremstyle{definition}
\theoremstyle{definition}
\theoremstyle{definition}
\def\({\left(}
\def\){\right)}
\def\ZZ{\mathbb{Z}}
\newcommand{\cN}{\mathcal{N}}
\def\fk{\mathfrak}
\def\barr{\begin{array}}
\def\earr{\end{array}}
\def\ba{\begin{aligned}}
\def\ea{\end{aligned}}
\def\be{\begin{equation}}
\def\ee{\end{equation}}
\def\qquand{\qquad\text{and}\qquad}
\def\quand{\quad\text{and}\quad}
\def\ds{\displaystyle}
\def\fkS{\fk S}
\def\ben{\begin{enumerate}}
\def\een{\end{enumerate}}
\def\fpf{{\textsf {FPF}}}
\def\Sfpf{\hat {\fk S}^\fpf}
\def\cHfpf{\mathcal{H}^{\textsf{Sp}}}
\def\iH{\mathcal{H}^{\textsf{O}}}
\def\iG{\mathfrak{G}^{\textsf{O}}}
\def\Gfpf{\mathfrak{G}^{\textsf{Sp}}}
\def\arcstart{\ \xy<0cm,-.15cm>\xymatrix@R=.1cm@C=.3cm }
\newcommand{\arcstartc}[1]{\ \xy<0cm,-.15cm>\xymatrix@R=.1cm@C=#1cm}
\def\ellhat{\widehat\ell}
\def\Sfpf{\widehat {\fk S}^\fpf}
\def\iS{\widehat \fkS}
\def\cG{\mathcal{G}}
\def\fkt{\fk t}
\newcommand{\bSym}{\overleftarrow{S}\hspace{-0.5mm}}
\newcommand{\bS}{\overleftarrow{\mathfrak{S}}\hspace{-0.5mm}}
\newcommand{\bSB}{\mathfrak{S}^{\textsf{B}}}
\newcommand{\bSC}{\mathfrak{S}^{\textsf{C}}}
\newcommand{\bSD}{\mathfrak{S}^{\textsf{D}}}
\newcommand{\bSE}{(\mathfrak{S}^{\textsf{D}})^*}
\newcommand{\belem}{\overleftarrow{e}\hspace{-0.5mm}}
\newcommand{\bP}{\overleftarrow{P}\hspace{-0.5mm}}
\def\WBC{W^{\textsf{BC}}}
\def\WD{W^{\textsf{D}}}
\def\SigmaC{\Sigma_{\mathsf{BC}}}
\def\SigmaD{\Sigma_{\mathsf{D}}}
\def\comajc{\operatorname{comaj}_{\mathsf{BC}}}
\def\comaj{\operatorname{comaj}}
\def\exmaj{\operatorname{comaj}_{\mathsf{D}}}
\newcommand{\A}{\textsf{HeckeAtoms}}
\newcommand{\cR}{\textsf{Reduced}}
\newcommand{\Hecke}{\textsf{Hecke}}
\newcommand{\PrimedHecke}{\textsf{PrimedHecke}}
\newcommand{\InvolHecke}{\textsf{InvHecke}}
\def\CS{\mathsf{Compatible}}
\def\cN{\mathsf{NilCox}}
\def\cH{\mathsf{IdCox}_\beta}
\def\IdInvol{\mathsf{InvolMod}_\beta}
\def\IdFixed{\mathsf{FixedMod}_\beta}
\newcommand{\bI}{\mathcal{I}_n}
\newcommand{\bIfpf}{\mathcal{I}^{\mathsf{FPF}}_n}
\newcommand{\bFPF}{1^{\mathsf{FPF}}}
\def\fkG{\mathfrak{G}}
\newcommand{\biG}{\widehat{\fkG}}
\newcommand{\biGfpf}{\widehat{\fkG}^{\mathsf{FPF}}}
\newcommand{\bG}{\overleftarrow{\fkG}\hspace{-0.5mm}}
\newcommand{\bGB}{\fkG^{\textsf{B}}}
\newcommand{\bGC}{\fkG^{\textsf{C}}}
\newcommand{\bGD}{\fkG^{\textsf{D}}}
\def\mFPF{m_1^{\mathsf{FPF}} }
\def\IP{\mathsf{InvDreams}}
\newcommand{\ltriang}{\raisebox{-0.5pt}{\tikz{\draw (0,0) -- (.25,0) -- (0,.25) -- (0,0);}}}
\newcommand{\ltriangneq}{\ltriang^{\!\!\neq}}
\def\bA{A^{(\beta)}}
\def\bB{B^{(\beta)}}
\def\bD{D^{(\beta)}}
\def\bC{C^{(\beta)}}
\def\bh{h^{(\beta)}}
\def\bfi{\mathbf{i}}
\numberwithin{equation}{section}
\renewcommand{\@makefnmark}{\mbox{\textsuperscript{}}}
\begin{document}
\title{Principal specializations of Schubert polynomials in classical types}
\author{
Eric Marberg
\\ Department of Mathematics \\  HKUST \\ {\tt eric.marberg@gmail.com}
\and
Brendan Pawlowski \\ Department of Mathematics  \\ University of Southern California \\ {\tt br.pawlowski@gmail.com}
}
\date{}

\maketitle

\setcounter{tocdepth}{2}

\begin{abstract}
There is a remarkable formula for the principal specialization of a type A Schubert polynomial as a weighted sum over reduced words. Taking appropriate limits transforms this to an identity for the backstable Schubert polynomials recently introduced by Lam, Lee, and Shimozono. This note identifies some analogues of the latter formula for principal specializations of Schubert polynomials in classical types B, C, and D. We also describe some more general identities for Grothendieck polynomials. As a related application, we derive a simple proof of a pipe dream formula for involution Grothendieck polynomials.
\end{abstract}

\section{Introduction}
\label{intro-sect}

There is a remarkable formula for the principal specialization $\fkS_w(1,q,q^2,\dots,q^{n-1})$ 
of a (type A) Schubert polynomial as a weighted sum over reduced words.
Originally a conjecture of Macdonald \cite{Macdonald}, this identity was first proved algebraically by Fomin and Stanley \cite{FominStanley}. Billey, Holroyd, and Young \cite{BHY,Young}
have recently found the first bijective proof of Macdonald's conjecture. 

In this note we identify some apparently new analogues of Macdonald's identity
for the principal specializations of Schubert polynomials in other classical types. 
Our methods are based on the algebraic techniques of Fomin and Stanley and 
 will also lead to a simple proof of (a $K$-theoretic generalization of) the main result of \cite{HMP6}.

To state our main theorems we need to recall a few definitions.
Throughout, we let $x_i$ for $i \in \ZZ$ be commuting indeterminates.
We use the term \emph{word} to mean a finite sequence $a_1a_2\cdots a_p$ whose letters belong to some totally ordered alphabet.
This alphabet will usually consist of 
the integers $\ZZ$ with their usual ordering, and in any case will always contain $(\ZZ,<)$ as a subposet.

\begin{definition} \label{compat-def}
A \emph{bounded compatible sequence} for a word $a=a_1a_2\cdots a_p$ 
is a weakly increasing sequence of integers $\bfi = (i_1 \leq i_2 \leq \dots \leq i_p)$
with the property that 
\[ i_j < i_{j+1}\text{ whenever } a_j \leq a_{j+1}
\qquand
 i_j \leq a_j \text{ whenever } 0 <i_j.\]
 Let $\CS(a)$ denote the set of all such sequences.
 Given $\bfi = (i_1  \leq \dots \leq i_p) \in \CS(a)$, define $x_\bfi = x_{i_1}\cdots x_{i_p}$
and write $0<\bfi$ if the numbers $i_1,\dots,i_p$ are all positive.
\end{definition}

Let $s_i = (i,i+1)$ denote the permutation of $\ZZ$ interchanging $i$ and $i+1$. 
Fix a positive integer $n$ and let $S_n := \langle s_1,s_2,\dots,s_{n-1}\rangle \subset S_\ZZ := \langle s_i : i\in \ZZ\rangle$.
Both $S_n$ and $S_\ZZ$ are Coxeter groups with respect to their given generating sets.
A \emph{reduced word} for $w \in S_\ZZ$ is a word $a_1a_2\cdots a_p$ of shortest possible length such that 
$w = s_{a_1} s_{a_2}\cdots s_{a_p}$.
Let $\cR(w)$ denote the set of all such words.

\begin{definition}
The \emph{Schubert polynomial} of $w \in S_n$ is
\[\fkS_w := \sum_{a \in \cR(w)} \sum_{0<\bfi \in \CS(a)} x_\bfi \in \ZZ[x_1,x_2,\dots,x_{n-1}].\]
\end{definition}

Schubert polynomials are often defined inductively using divided difference operators, following the 
approach of Lascoux and Sch\"utzenberger.
The formula that we have given  is \cite[Thm. 1.1]{BJS}.
The identity of Macdonald \cite{Macdonald} mentioned at the start of this introduction is as follows.

\begin{theorem}[{Fomin and Stanley \cite[Thm. 2.4]{FominStanley}}]
\label{fs-thm}
If $w \in S_n$ then
\[
\fkS_w(1,q,q^2,\dots,q^{n-1}) = \sum_{a=a_1a_2\cdots a_p \in \cR(w)} \tfrac{[a_1]_q[a_2]_q\cdots [a_p]_q}{[p]_q!} q^{\comaj(a)}.
\]
where $\comaj(a) := \sum_{a_i < a_{i+1}} i$ and $[a]_q := \frac{1-q^a}{1-q}$ and $[p]_q! := [p]_q\cdots [2]_q[1]_q$.
\end{theorem}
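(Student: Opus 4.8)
The plan is to deduce the identity from the combinatorial formula $\fkS_w=\sum_{a\in\cR(w)}\sum_{0<\bfi\in\CS(a)}x_\bfi$ by repackaging it inside the nilCoxeter algebra and then specializing. Let $\mathcal{A}_n$ denote the nilCoxeter algebra of $S_n$: it has $\ZZ$-basis $\{u_w:w\in S_n\}$, is generated by $u_i:=u_{s_i}$ subject to $u_i^2=0$, the braid relations, and $u_iu_j=u_ju_i$ for $|i-j|\ge 2$, and satisfies $u_{a_1}\cdots u_{a_p}=u_w$ when $a_1\cdots a_p\in\cR(w)$ and $u_{a_1}\cdots u_{a_p}=0$ otherwise. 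First I would sort the entries of a bounded compatible sequence into blocks of equal value: a pair $(a,\bfi)$ with $0<\bfi\in\CS(a)$ is precisely the data of a factorization $a=d^{(1)}d^{(2)}d^{(3)}\cdots$ (finitely many nonempty factors) into strictly decreasing words $d^{(v)}$ all of whose letters lie in $\{v,v+1,\dots,n-1\}$, with $\bfi$ recording each $v$ with multiplicity $\ell(d^{(v)})$. Since $\sum_{d}x^{\ell(d)}u_d=(1+xu_{n-1})(1+xu_{n-2})\cdots(1+xu_v)=:B_v(x)$, summing $x_\bfi u_a$ over all such pairs yields the generating identity $\sum_{w\in S_n}\fkS_w(x_1,\dots,x_{n-1})\,u_w=B_1(x_1)B_2(x_2)\cdots B_{n-1}(x_{n-1})$, and therefore
\[
\fkS_w(1,q,q^2,\dots,q^{n-1})=[u_w]\;B_1(1)\,B_2(q)\,B_3(q^2)\cdots B_{n-1}(q^{n-2}).
\]

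Next I would line this up with the right-hand side of the theorem. Using $[m]_q=\sum_{0\le c<m}q^c$ gives $[a_1]_q\cdots[a_p]_q=\sum q^{(i_1-1)+\cdots+(i_p-1)}$ summed over \emph{all} integer sequences with $1\le i_j\le a_j$, while $[p]_q!=\sum_{\pi\in S_p}q^{\comaj(\pi)}$; so the asserted formula is equivalent to
\[
[p]_q!\sum_{a\in\cR(w)}\ \sum_{0<\bfi\in\CS(a)}q^{\sum_j(i_j-1)}=\sum_{a\in\cR(w)}q^{\comaj(a)}\sum_{1\le i_j\le a_j}q^{\sum_j(i_j-1)}.
\]
It is worth stressing at the outset that this is \emph{not} an identity term by term in $a$: an individual reduced word contributes a genuine rational function of $q$ (for instance $\tfrac{[1]_q[2]_q[1]_q}{[3]_q!}q^{\comaj(121)}=\tfrac{q}{1+q+q^2}$ for the reduced word $121$ of the longest element of $S_3$), and only the sum over all of $\cR(w)$ is a polynomial. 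Hence any proof must be global in $\cR(w)$, and this is exactly what the nilCoxeter model supplies, since the rearrangements of a reduced word that fail to be reduced are automatically killed by the relation $u_a=0$.

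The heart of the argument is then to evaluate the element $B_1(1)B_2(q)\cdots B_{n-1}(q^{n-2})$ of $\mathcal{A}_n$ directly. The natural tools are the factors $h_i(x):=1+xu_i$, which obey $h_i(x)h_i(y)=h_i(x+y)$, $h_i(x)h_j(y)=h_j(y)h_i(x)$ for $|i-j|\ge2$, and the Yang--Baxter relation $h_i(x)h_{i+1}(x+y)h_i(y)=h_{i+1}(y)h_i(x+y)h_{i+1}(x)$, together with the rescaling automorphism $u_i\mapsto qu_i$; the goal is to push the scalars $1,q,q^2,\dots$ through the product and bring it into a normal form whose $u_w$-coefficient is visibly $\tfrac{1}{[p]_q!}\sum_{a\in\cR(w)}[a_1]_q\cdots[a_p]_q\,q^{\comaj(a)}$. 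The step I expect to be the main obstacle is precisely this: the geometric-progression scalars do not put the product into Yang--Baxter form on the nose, so one has to interleave the relations carefully (or split each $h_i(q^{v-1})$ using $h_i(x)h_i(y)=h_i(x+y)$) so that both the $q$-factorial denominator and the $q^{\comaj}$ weights emerge with the right exponents. This is essentially the argument of Fomin and Stanley; an equivalent formulation would run the same bookkeeping as a weight-preserving bijection — or sign-reversing involution — on pairs $(a,\bfi)$ taken modulo the nilCoxeter relations.

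Finally, as a consistency check and for context I would note that the coefficient of a fixed $u_w$ stabilizes as $n\to\infty$, so taking this limit of the displayed product recovers the corresponding statement for the backstable Schubert polynomials alluded to in the abstract; in that limit the bounds $i_j\le a_j$ become vacuous on all but finitely many blocks, the sum over $\bfi$ for a \emph{fixed} $a$ does factor as a $P$-partition generating function, and one obtains $\sum_{a\in\cR(w)}q^{\comaj(a)}\big/\big((1-q)(1-q^2)\cdots\big)$ after normalization. Seen this way, Theorem~\ref{fs-thm} is the flagged refinement of that cleaner ``infinite'' identity, which is why the per-word factorization is no longer available in the finite case and why the proof has to be global.
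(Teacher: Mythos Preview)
The paper does not prove this theorem; it is quoted as background with a citation to \cite[Thm.~2.4]{FominStanley}, and the paper's own contributions begin with the backstable and type B/C/D analogues. So there is no proof in the paper to compare against.

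Judged on its own, your proposal is an outline rather than a proof. The setup is correct and is precisely Fomin and Stanley's framework: the nilCoxeter product formula $\sum_w \fkS_w\,u_w = B_1(x_1)\cdots B_{n-1}(x_{n-1})$, the specialization $x_i\mapsto q^{i-1}$, and the relations $h_i(x)h_i(y)=h_i(x+y)$ together with Yang--Baxter as the available moves. But you stop at the decisive step: you say that rewriting $B_1(1)B_2(q)\cdots B_{n-1}(q^{n-2})$ into a form whose $u_w$-coefficient is visibly $\tfrac{1}{[p]_q!}\sum_a [a_1]_q\cdots[a_p]_q\,q^{\comaj(a)}$ ``is essentially the argument of Fomin and Stanley,'' and that ``the main obstacle'' is exactly this rewriting. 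Deferring the only nontrivial computation to the very reference whose theorem you are reproving is circular. The intermediate reformulation you give (clearing $[p]_q!$ and comparing two sums over pairs $(a,\bfi)$) is valid but, as you yourself note, not term-by-term in $a$, so it does not reduce the problem either.

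What is missing is a concrete identity in $\mathcal A_n$ for the specialized product. In Fomin--Stanley this is obtained by an explicit inductive manipulation of $B_1(1)B_2(q)\cdots B_{n-1}(q^{n-2})$ using the Yang--Baxter relation at geometric arguments, yielding a closed form from which the $[a_i]_q$ numerators, the $q^{\comaj}$ weight, and the $[p]_q!$ denominator can all be read off. Your sketch names the right ingredients but supplies none of this computation, so as written there is a genuine gap at the heart of the argument.
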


Taking appropriate limits transforms the preceding formula into an identity for 
the \emph{backstable Schubert polynomials},
which may be defined as follows.

\begin{definition}
The \emph{backstable Schubert polynomial} of $w \in S_n$ is
\[ \bS_w := 
\sum_{a \in \cR(w)} \sum_{\bfi \in \CS(a)} x_{\bfi} \in \ZZ[[\dots,x_{-1},x_0,x_1,\dots,x_{n-1}]].\]

\end{definition}

This is the same as the formula for $\fkS_w$ except now $\bfi=(i_1\leq i_2 \dots\leq i_p)$ may contain non-positive integers.
If $w \in S_n$ then $\bS_w(\dots,0,0,x_1,x_2,\dots,x_{n-1}) = \fkS_w$, 
while $\bS_w(\dots,x_{-2}, x_{-1},x_0,0,0,\dots,0)$ is the \emph{Stanley symmetric function} of $w$ in the variables $x_i$ for $i\leq 0$ \cite[Thm. 3.2]{LamLeeShim}. 

Note that $\bS_w$ is usually not a polynomial.
These power series were introduced by Lam, Lee, and Shimozono \cite{LamLeeShim} in connection with Schubert calculus on infinite flag varieties.  
They also arise as cohomology classes of degeneracy loci in products of flag varieties \cite{Pawlowski2019}.

If $F \in \ZZ[[\dots,x_{-1},x_0,x_1,\dots,x_{n-1}]]$ is homogeneous then 
 the formal power series $F(x_i \mapsto q^{i-1})$ obtained by setting $x_i = q^{i-1}$ for all integers $i<n$ is well-defined.
 The following result is easy to derive from Theorem~\ref{fs-thm} and is also a special case of Theorem~\ref{groth-thm}.
In this statement, for a word $a=a_1a_2\cdots a_p$ we write $\sum a := \sum_{i=1}^p a_i$ and $\ell(a) := p$.
\begin{theorem}\label{intro-thm-A}
If $w \in S_n$ then
$
\bS_w(x_i \mapsto q^{i-1}) = \sum_{a \in \cR(w)} \tfrac{q^{\sum a+\comaj(a)}}{(q-1)(q^{2}-1)\cdots (q^{\ell(a)}-1)}
$
where the right hand expression is interpreted as a Laurent series in $q^{-1}$.
\end{theorem}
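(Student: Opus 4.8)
The plan is to derive this from Theorem~\ref{fs-thm} by a limiting argument. Fix $w \in S_n$ and recall that $\bS_w$ is a sum over reduced words $a \in \cR(w)$ of terms $\sum_{\bfi \in \CS(a)} x_\bfi$, where now $\bfi$ ranges over all weakly increasing integer sequences (not just positive ones) satisfying the compatibility conditions. I would first reduce to the case of a single reduced word: since $\bS_w$ is a (locally finite, homogeneous of degree $\ell(w)$) sum over $a \in \cR(w)$, it suffices to show that for each fixed $a = a_1 \cdots a_p \in \cR(w)$,
\[
\sum_{\bfi \in \CS(a)} x_\bfi \Big|_{x_i \mapsto q^{i-1}} = \frac{q^{\sum a + \comaj(a)}}{(q-1)(q^2-1)\cdots(q^p - 1)},
\]
interpreted as a Laurent series in $q^{-1}$, and then sum over $a$.

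Next I would connect this single-word sum to the type A Schubert side. The key observation is that for $w$ with reduced words all having entries in $\{1,\dots,N-1\}$, the truncation $\bS_w(\dots,x_{1-N}, \dots, x_{-1}, x_0, x_1, \dots, x_{n-1})$ with the negatively-indexed variables kept and everything with index $\geq n$ set to zero is, after a shift of indices by $N$, exactly the ordinary Schubert polynomial $\fkS_{w'}$ for a shifted permutation $w' = 1^N \times w$ acting on $\{N+1,\dots\}$ — more precisely, the compatible sequences for $a$ shifted up by $N$ are exactly the bounded positive compatible sequences for the shifted word. Under $x_i \mapsto q^{i-1}$ this shift turns the principal specialization $\fkS_{w'}(1,q,\dots,q^{?})$ (up to an overall power of $q$) into the relevant piece of $\bS_w(x_i \mapsto q^{i-1})$. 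Applying Theorem~\ref{fs-thm} to $\fkS_{w'}$, whose reduced words are just the shifts of reduced words of $w$, gives for each word $a$ a contribution $\tfrac{[a_1']_q \cdots [a_p']_q}{[p]_q!} q^{\comaj(a')}$ where $a_j' = a_j + N$; one then lets $N \to \infty$. Since $[a_j + N]_q = \frac{1 - q^{a_j + N}}{1-q}$, after pulling out the dominant powers of $q$ and clearing the overall shift, the factors $[a_j + N]_q / [p]_q!$ converge (as Laurent series in $q^{-1}$, coefficientwise) to $\frac{q^{\sum a}}{(q-1)(q^2-1)\cdots(q^p-1)}$, up to bookkeeping of powers of $q$, and $\comaj(a') = \comaj(a)$. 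Tracking the exact power of $q$ produced by the index shift and by the normalization of $[p]_q!$ versus $\prod (q^j - 1)$ is what pins down the exponent $\sum a + \comaj(a)$.

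The main obstacle I anticipate is making the limiting argument rigorous at the level of formal Laurent series: one must check that the truncations of $\bS_w$ stabilize appropriately and that the specialization $x_i \mapsto q^{i-1}$ commutes with the relevant limit, i.e. that each coefficient of the Laurent series in $q^{-1}$ on the right is eventually constant in $N$. This requires the homogeneity of $\bS_w$ together with the fact that for fixed degree only finitely many compatible sequences $\bfi$ contribute to any given power of $q$ after specialization. A cleaner alternative, which I would pursue if the limit bookkeeping becomes unwieldy, is to prove the single-word identity directly: the generating function $\sum_{\bfi \in \CS(a)} x_\bfi$ factors combinatorially according to the run structure of $a$ (the positions $i$ where $a_i < a_{i+1}$ force a strict increase $i_i < i_{i+1}$), and specializing $x_i \mapsto q^{i-1}$ turns the sum over such lattice points into a product/geometric-series computation whose value is visibly $\frac{q^{\sum a + \comaj(a)}}{\prod_{j=1}^p (q^j-1)}$; this is essentially the $\beta = 0$ case of the Grothendieck computation in Theorem~\ref{groth-thm}, so I would simply note that the argument is subsumed there and give the short limiting deduction here.
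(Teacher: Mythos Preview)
Your opening reduction is wrong: the single-word identity
\[
\sum_{\bfi \in \CS(a)} x_\bfi \Big|_{x_i \mapsto q^{i-1}} \;=\; \frac{q^{\sum a + \comaj(a)}}{(q-1)(q^2-1)\cdots(q^p - 1)}
\]
is \emph{false} in general. Example~\ref{ex1} computes both sides for the reduced word $a = 1,3$ of $(1,2)(3,4)$ and finds they disagree already at the coefficient of $q^1$. So you cannot ``reduce to the case of a single reduced word,'' and your ``cleaner alternative'' fails for the same reason: once the upper-bound constraints $i_j \leq a_j$ are present, the single-word generating function does not factor into that product. Theorem~\ref{groth-thm} is likewise an identity only after summing over all Hecke words, not termwise, so it does not subsume a single-word statement either.

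What survives is the full-sum version of your limiting argument. With $w' = 1^N \times w$, the shift $\bfi' = \bfi + N$ identifies positive compatible sequences for the shifted words with backstable compatible sequences for $a$ satisfying $i_j > -N$, so $q^{-pN}\fkS_{w'}(1,q,\dots,q^{n+N-2})$ is a truncation of $\bS_w(x_i\mapsto q^{i-1})$; applying Theorem~\ref{fs-thm} to $w'$ and sending $N\to\infty$ termwise over $a\in\cR(w)$ (a finite sum) gives the claimed right-hand side. The two sums over $\cR(w)$ agree only in total, not word by word. The paper's own proof takes the other route you mention---the $\beta=0$ case of Theorem~\ref{groth-thm}---which also works only at the level of the full element $\bG\in\cH$: one rewrites $\bG$ via Proposition~\ref{prop1}, applies Lemma~\ref{formal-lem}, and extracts the coefficient of $\pi_w$. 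Drop the single-word claim and either argument goes through.
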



\begin{example}\label{ex1}
Setting $x_i = q^{i-1}$ in
the definition of $\bS_w$ gives another formula for $\bS_w(x_i \mapsto q^{i-1}) $ as a sum 
over the reduced words for $w$.
The corresponding terms in these two summations need not agree, however: for a given word $a=a_1a_2\cdots a_p \in \cR(w)$, it can happen that
\[
 \sum_{\bfi \in \CS(a) } q^{(i_1-1)+(i_2-1)+ \dots+(i_{p}- 1)} 
  \neq
  \tfrac{q^{\sum a+\comaj(a)}}{(q-1)(q^{2}-1)\cdots (q^{p}-1)}.
  \]
For example, if $w= (1,2)(3,4)$ and $a=a_1a_2=1,3$ then
$ \sum_{\bfi \in \CS(a)} q^{(i_1-1)+ \dots+(i_{p}- 1)} $ is  
\[ \sum_{1 \geq i_1 <i_2 \leq 3} q^{(i_1-1)+(i_2-1)}  \in q^2 + 2q + 2 + q^{-1}\ZZ[[q^{-1}]] \]
while
$ \tfrac{q^{\sum a+\comaj(a)}}{(q-1)(q^{2}-1)\cdots (q^{p}-1)}= \frac{q^5}{(q-1)(q^2-1)}$ expands into the Laurent series
\[
q^{5} (q^{-1} + q^{-2} + q^{-3}+\dots) (q^{-2} + q^{-4} +\dots)
 \in q^2 + q + 2 + q^{-1}\ZZ[[q^{-1}]]. \]
 For $w = (1,2)(3,4)$ there are only two reduced words and one has 
 \[ \ba
 \bS_{(1,2)(3,4)} = \belem_1^2 &+ (2x_1 + x_2 + x_3)\belem_1 + x_1^2 + x_1 x_2 + x_1 x_3 
 \\
=  \dots &+ x_0^2 + 2x_{-1}x_1  +x_{-2} x_2  +x_{-3} x_3 
 \\& +  2x_0 x_1 + x_{-1}x_2  + x_{-2}x_3 
 \\& +x_1^2 + x_0 x_2 + x_{-1}x_3 
 \\ & + x_1 x_2 + x_0 x_3  
 \\ &+ x_1 x_3
 \ea\]
 where $\belem_d$ is the elementary symmetric function $\sum_{i_1 <i_2< \cdots < i_d \leq 0} x_{i_1}x_{i_2} \cdots x_{i_d}$. One computes 
\[
  \bS_{(1,2)(3,4)}(x_i \mapsto q^{i-1}) = \tfrac{q^4}{(q-1)^2} = \dots + 7q^{-4} + 6q^{-3} + 5q^{-2} + 4q^{-1} + 3 + 2q + q^2
\]
using either Theorem~\ref{intro-thm-A} or the formula $\belem_d(q^{-1}, q^{-2}, \ldots) = \frac{1}{(q-1)(q^2-1)\cdots (q^d-1)}$.
  \end{example}

Our first new results are versions of the preceding theorem for Schubert polynomials in other classical types.
We begin with type B/C.
Given $0 < i <n$, define $t_i = t_{-i} := (i,i+1)(-i,-i-1)$ and $t_0 := (-1,1)$.
Define $\WBC_n :=\langle t_0,t_1,\dots,t_{n-1}\rangle$ to be the Coxeter group consisting of the permutations $w$ of $\ZZ$ with $w(i) = i$ for $|i| > n$ and $w(-i)=-w(i)$ for all $i \in \ZZ$.

A \emph{signed reduced word of type B} for an element $w \in \WBC_n$ is a word $a_1a_2\cdots a_p$ with letters in 
the set
$
\{ -n+1,\dots,-1,0,1,\dots,n-1\}
$ of shortest possible length such that 
$w = t_{a_1} t_{a_2}\cdots t_{a_p}$.
Let $-0$ denote a formal symbol distinct from $0$ that satisfies $-1<-0<0<1$ and set
$t_{-0} := t_0$.
A \emph{signed reduced word of type C} for $w \in \WBC_n$ is a word $a_1a_2\cdots a_p$ with letters in 
$
\{ -n+1,\dots,-1,-0,0,1,\dots,n-1\}
$ of shortest possible length such that 
$w = t_{a_1} t_{a_2}\cdots t_{a_p}$.
Let $\cR_B^\pm(w)$ and  $\cR_C^\pm(w)$ denote the respective sets of signed reduced words  for
 $w$.

\begin{definition}
The \emph{type B/C Schubert polynomials} of $w \in \WBC_n$ are
\[ \bSB_w := 
\sum_{\substack{a \in \cR_B^\pm(w)
\\
\bfi \in \CS(a)}}
x_\bfi
\qquand
 \bSC_w :=
\sum_{\substack{a \in \cR_C^\pm(w)
\\
\bfi \in \CS(a)}}
x_{\bfi}
=2^{\ell_0(w)} \bSB_w
\]
where $\ell_0(w) := |\{ i \in \ZZ : w(i) < 0<i\}|$.
\end{definition}

Both of the ``polynomials'' $\bSB_w$ and $\bSC_w$ are formal power series in $ \ZZ[[\dots,x_{-1},x_0,x_1,\dots,x_{n-1}]]$.
If we substitute $x_i\mapsto z_i$ for $i>0$ and $x_i \mapsto x_{1-i}$ for $i\leq 0$, then
 $\bSB_w$ and $\bSC_w$ specialize to the Schubert polynomials of types B and C
defined by Billey and Haiman in \cite{BH}; compare our definition with \cite[Thm. 3]{BH}.

Let $\cR_C(w)$ for $ w\in \WBC_n$ denote the subset of words in $\cR_C^\pm(w)$ whose letters all belong to $\{0,1,\dots,n-1\}$.
In Section~\ref{type-c-sect} we prove the following analogue of Theorem~\ref{intro-thm-A}.

\begin{theorem}\label{intro-thm-C}
 If $w \in \WBC_n$ then 
\[\bSC_w(x_i \mapsto q^{i-1}) =   \sum_{a=a_1a_2\cdots a_p \in \cR_C(w)} 
\tfrac{(q^{a_1}+1)(q^{a_2}+1)\cdots (q^{a_p}+1)}{(q-1)(q^{2}-1)\cdots (q^{p}-1)} q^{\comaj(a)}\]
where the right hand expression is interpreted as a Laurent series in $q^{-1}$.
\end{theorem}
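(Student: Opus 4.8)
The plan is to adapt Fomin and Stanley's nilCoxeter-algebra argument to type $C$. Let $\mathcal N$ be the nilCoxeter algebra of $\WBC_n$: the associative $\ZZ$-algebra with generators $\pi_0,\pi_1,\dots,\pi_{n-1}$ subject to $\pi_i^2=0$ and the braid relations of type $C_{n-1}$ (so $\pi_0\pi_1\pi_0\pi_1=\pi_1\pi_0\pi_1\pi_0$), and let $\widehat{\mathcal N}$ be a completion in which the relevant infinite sums converge. In $\mathcal N$ the products $\pi_w$ ($w\in\WBC_n$) along reduced words form a basis, a non-reduced $\pi$-word being $0$; write $\langle\pi_w,-\rangle$ for the coefficient of $\pi_w$. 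Group the positions $j$ of a pair $(a,\bfi)$ with $a\in\cR_C^\pm(w)$ and $\bfi\in\CS(a)$ according to the value $v=i_j$: the positions with $i_j=v$ form a contiguous block on which $a$ is strictly decreasing in the type $C$ order, with arbitrary signed letters when $v\le 0$ and only the positive letters $\ge v$ when $v\ge 1$; a strictly decreasing run translates into the corresponding ordered product of the $\pi_{|a_j|}$. Reading the blocks from left to right in order of increasing $v$, this gives
\[
\bSC_w=\big\langle\pi_w,\ \big(\cdots B(x_{-1})\,B(x_0)\big)\big(B_1(x_1)B_2(x_2)\cdots B_{n-1}(x_{n-1})\big)\big\rangle,
\]
where $B_v(x)=(1+x\pi_{n-1})(1+x\pi_{n-2})\cdots(1+x\pi_v)$ and $B(x)=(1+x\pi_{n-1})\cdots(1+x\pi_1)(1+x\pi_0)(1+x\pi_0)(1+x\pi_1)\cdots(1+x\pi_{n-1})$; the doubled factor $(1+x\pi_0)(1+x\pi_0)=1+2x\pi_0$ records the two type $C$ letters $0$ and $-0$, matching $\bSC_w=2^{\ell_0(w)}\bSB_w$. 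Substituting $x_v=q^{v-1}$ and noting $q^{v-1}\to 0$ as $v\to-\infty$ (in the $q^{-1}$-adic sense), the left-hand infinite product converges in $\widehat{\mathcal N}$.

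Next I would prove the type $C$ analogue of Fomin and Stanley's commutation lemma — above all $B(x)B(y)=B(y)B(x)$, together with the relations governing how $B(x)$ moves past the finite factors $B_v(y)$ — and use these to evaluate the commuting product $\cdots B(q^{-2})B(q^{-1})$ and to extract the coefficient of $\pi_w$. Concretely, this product can be written as a ``type $C$ Cauchy kernel'' $\sum_\mu Q_\mu(\dots,x_{-1},x_0)\,\pi(\mu)$ over strict partitions $\mu$, with $Q_\mu$ a Schur $Q$-function, whereupon the principal specialization invokes the standard formula for $Q_\mu(q^{-1},q^{-2},\dots)$; alternatively one may avoid symmetric functions and argue by the same telescoping that deduces Theorem~\ref{intro-thm-A} from Theorem~\ref{fs-thm}. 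The finite factor $B_1(q^0)\cdots B_{n-1}(q^{n-2})$ is handled exactly as in that deduction and supplies the powers $q^{\comaj}$ and the denominator $(q-1)(q^2-1)\cdots(q^p-1)$. Reading off the coefficient of $\pi_w$ as a sum over unsigned reduced words $a=a_1\cdots a_p\in\cR_C(w)$ and merging the two sources of contributions letter by letter, each $a_j$ acquires the factor $q^{a_j}+1$: the summand $q^{a_j}$ from the possibility $i_j\le a_j$ (the sign $+a_j$) and the summand $1$ from the possibility $i_j\le 0$ (the sign $-a_j$), with $a_j=0$ contributing $2=1+q^0$ through the doubled $\pi_0$. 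This yields exactly $\tfrac{(q^{a_1}+1)\cdots(q^{a_p}+1)}{(q-1)(q^2-1)\cdots(q^p-1)}\,q^{\comaj(a)}$.

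I expect the main obstacle to be this middle step. The type $C$ commutation identities are subtler than their type $A$ analogues because of the order-$4$ braid relation and the doubled generators near index $0$; and one then has to line up the Schur $Q$-function principal specialization with the type $A$ factor so that all residual symmetric-function data cancels and the answer collapses to the displayed reduced-word sum, with $\comaj$ and the $q$-factorial taken relative to the linear order $0<1<\dots<n-1$. Useful checks along the way are the identity $\bSC_w=2^{\ell_0(w)}\bSB_w$; the small cases $w=t_0\in\WBC_1$, $w=t_1\in\WBC_2$, and $w=t_0t_1\in\WBC_2$, for which both sides evaluate to $\tfrac{2}{q-1}$, $\tfrac{q+1}{q-1}$, and $\tfrac{2q}{(q-1)^2}$; and, as in Example~\ref{ex1}, comparing leading terms against the series obtained by substituting $x_i=q^{i-1}$ directly into the definition of $\bSC_w$.
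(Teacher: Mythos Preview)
Your setup is exactly the paper's: the nil-Coxeter algebra and the product expression $\bSC=\prod_{i\le 0}C(x_i)\prod_{i=1}^{n-1}A_i(x_i)$ (your $B$ and $B_v$ are the paper's $C$ and $A_v$). But the middle step you flag as the obstacle is where the proposal has a real gap, and your worry is pointed at the wrong difficulty.

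The paper does not separate the infinite and finite factors and evaluate them independently, nor does it pass through Schur $Q$-functions or use the type~$C$ braid relation. Instead it proves a single telescoping identity (Proposition~\ref{prop2}):
\[
\bSC \;=\; \prod_{j=-\infty}^{0}\ \prod_{i=0}^{n-1} h_i\bigl(x_{i+j}+x_j\bigr),
\]
obtained by repeatedly peeling off the rightmost factor using only the \emph{type~$A$} commutation $[A_i(x),\widetilde A_i(y)]=0$ from \cite[Lem.~4.1]{FominStanley}. The generator $\pi_0$ plays no special role beyond $h_0(x)h_0(y)=h_0(x+y)$; the order-$4$ braid relation is never invoked. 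After this rewriting, setting $x_k=q^{k-1}$ gives $x_{i+j}+x_j=q^{j-1}(1+q^{i})$, and a single application of Lemma~\ref{formal-lem} with $z_i=1+q^{i-1}$ and $\fkt_i=u_{t_{i-1}}$ produces the formula. The factor $q^{a_j}+1$ thus arises algebraically from the argument $x_{i+j}+x_j$, not from a per-letter ``two sign choices'' count.

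Your heuristic that each letter independently contributes $q^{a_j}$ (positive sign) plus $1$ (negative sign) is not a valid argument: as Example~\ref{ex1} already shows in type~$A$, the per-word contribution $\sum_{\bfi\in\CS(a)}q^{\sum(i_j-1)}$ generally differs from the per-word term in the theorem, and only the total over all reduced words agrees. Likewise, handling $\prod_{i\le 0}C(x_i)$ via a $Q$-function Cauchy kernel can be made to work but is substantially heavier than needed and still leaves you to recombine with the finite tail; the paper's telescoping mixes the positive and nonpositive $x$-variables from the outset and avoids all of that.
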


\begin{example}
If $w = (1,-2)(2,-1) \in\WBC_n$ then the set $\cR_C^\pm(w)$ has 8 elements,
 formed by adding arbitrary signs to the letters in $a_1a_2a_3  =0,1,0$. One can compute that
 \[ \ba
 \bSC_{ (1,-2)(2,-1)} &= 4\belem_2 \belem_1 - 4\belem_3
 \\
 &= \dots +   4x_{-2}x_{-1}^2 + 4x_{-2}^2 x_0 + 8x_{-3}x_{-1}x_0 + 4x_{-4}x_0^2  
 \\& \hspace{9.6mm} +4x_{-3}x_0^2  +8x_{-2} x_{-1}x_0   \\ & \hspace{9.6mm} + 4x_{-2} x_0^2 + 4 x_{-1}^2x_0  \\ &  \hspace{9.6mm} + 4x_{-1} x_0^2 
 \ea\]
 where $\belem_d :=\sum_{i_1 <i_2< \cdots < i_d \leq 0} x_{i_1}x_{i_2} \cdots x_{i_d}$ as in Example~\ref{ex1}.
 It follows that
 \[
  \bSC_{(1,-2)(2,-1)}(x_i \mapsto q^{i-1}) = \tfrac{4q}{(q-1)^2(q^3-1)} = \dots + 36q^{-9} + 28q^{-8} + 20q^{-7} + 12q^{-6} + 8q^{-5} + 4q^{-4}.
  \]
  \end{example}

We turn next to type D.
For $1 < i < n$, let $r_i =r_{-i} := (i,i+1)(-i,-i-1)=t_i$ but define 
\[ r_1 := (1,2)(-1,-2) = t_1 \qquand r_{-1} := (1,-2)(-1,2) = t_0t_1t_0.\]
Define $\WD_n:=\langle r_{-1}, r_1,r_2,\dots,r_{n-1}\rangle$ to be the Coxeter group of permutations $w \in \WBC_n$ 
for which the number of positive integers $i$ with $w(i) < 0$ is even.
A \emph{signed reduced word} for $w \in \WD_n$
is a word $a_1a_2\cdots a_p$ with letters  in the set
 $\{-n+1,\dots,-2,-1,1,2,\dots,n-1\}$
 of shortest possible length such that 
$w = r_{a_1} r_{a_2}\cdots r_{a_p}$.
Let $\cR_D^\pm(w)$ denote the set of such words.

\begin{definition}
The \emph{type D Schubert polynomial} of $w \in \WD_n$ is
\[ \bSD_w  =
\sum_{a \in \cR_D^\pm(w)} 
\sum_{\bfi \in \CS(a)}
x_{\bfi}  \in  \ZZ[[\dots,x_{-1},x_0,x_1,\dots,x_{n-1}]].\]
\end{definition}

If we again substitute $x_i\mapsto z_i$ for $i>0$ and $x_i \mapsto x_{1-i}$ for $i\leq 0$, then
our definition of the power series $\bSD_w$ specializes to Billey and Haiman's formula for the Schubert polynomial of type D
given in \cite[Thm. 4]{BH}.

Suppose $a=a_1a_2\cdots a_p$ is a sequence of integers $a_i \in \{\pm 1, \pm 2, \pm3,\dots, \pm (n-1) \}$.
Define \be\exmaj(a) := |\{ i  : a_i >0\}| + \sum_{a_i \prec a_{i+1}} 2i\ee
where $\prec$ is the order $-1 \prec -2 \prec \dots \prec -n \prec1 \prec 2 \prec \dots \prec n$.
For example, if $a=a_1a_2a_3a_4 = -1,-2,3,1$ then $\exmaj(a) = 2 + (2 + 4)=8$.
We prove the following in Section~\ref{type-d-sect}.

\begin{theorem}\label{intro-thm-D}
 If $w \in \WD_n$ then 
\[ \bSD_w(x_i \mapsto q^{i-1}) =    \sum_{a=a_1a_2\cdots a_p \in \cR_D^\pm(w)}
\tfrac{(q^{|a_1|}+1)(q^{|a_2|}+1)\cdots (q^{|a_p|}+1)}{(q^2-1)(q^{4}-1)\cdots (q^{2p}-1)} q^{\exmaj(a)}\]
where the right hand expression is interpreted as a Laurent series in $q^{-1}$.
\end{theorem}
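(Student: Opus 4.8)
The plan is to adapt the nilCoxeter‑algebra technique of Fomin and Stanley, running the type $D$ version of the argument used for Theorem~\ref{intro-thm-C}. Let $\mathcal{N}_D$ be a suitable completion of the nilCoxeter algebra of $\WD_n$, with generators $u_{-1},u_1,u_2,\dots,u_{n-1}$ satisfying $u_i^2=0$ and the type $D$ braid relations; for a word $a=a_1\cdots a_p$ with letters in $\{\pm1,\dots,\pm(n-1)\}$ write $u_a:=u_{a_1}\cdots u_{a_p}$, recalling that $u_{a_j}=u_{-a_j}$ whenever $|a_j|\ge 2$, so that $u_a=u_w$ when $a\in\cR_D^\pm(w)$ and $u_a=0$ otherwise. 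The first step is to record the ``Cauchy identity''
\[
\sum_{w\in\WD_n}\bSD_w\, u_w \;=\; \cdots\,\mathbf A(x_{-1})\,\mathbf A(x_0)\,\mathbf A(x_1)\cdots\mathbf A(x_{n-1}),
\]
where $\mathbf A(x)$ is the appropriate generating‑function operator: an ordered product of factors $1+x u_i$ indexed so that the coefficient of $u_w$ collects exactly $\sum_{a\in\cR_D^\pm(w)}\sum_{\bfi\in\CS(a)}x_\bfi$, with the fork of the type $D$ Dynkin diagram entering as the commuting pair $1+xu_{-1}$, $1+xu_1$ at the bottom of the chain. This holds by grouping the monomials of $\bSD_w$ according to the level sets of a compatible sequence, and amounts to Billey--Haiman's formula \cite{BH}. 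I then set $x_i=q^{i-1}$ for each $i<n$, which is legitimate $q^{-1}$‑adically because $\mathbf A(q^{i-1})\to 1$ as $i\to-\infty$.

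The heart of the argument is to evaluate the specialized product $P:=\cdots\mathbf A(q^{-1})\mathbf A(1)\mathbf A(q)\cdots\mathbf A(q^{n-2})$ in closed form. For this I would establish a parametrized Yang--Baxter relation for the factors $1+xu_i$ in $\mathcal{N}_D$, the type $D$ analogue of the commutation identities underlying the proof of Theorem~\ref{fs-thm} in \cite{FominStanley}, and use it to commute the factors one at a time, telescoping $P$ into a single element of $\mathcal{N}_D$. At the $k$‑th step this should introduce a scalar $\tfrac{1}{q^{2k}-1}$ (the doubled exponent being a genuine feature of type $D$, traceable either to the fork or to the relation $r_{-1}=t_0t_1t_0$, and visible already in the denominators $(q^2-1)(q^4-1)\cdots$), produce the numerator factors $q^{|a_j|}+1$ (one $2$'s worth from the free $\pm$ sign carried by each letter with $|a_j|\ge 2$, and the rest from the accumulated $q$‑weight, exactly as $\bSC_w=2^{\ell_0(w)}\bSB_w$ reflects a sign on each letter in type C), and accumulate a $q$‑power that adds up to $q^{\exmaj(a)}$ on each $a\in\cR_D^\pm(w)$. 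Reading off the coefficient of $u_w$ in $P$ then yields the right‑hand side of the theorem.

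The last step is to check that the exponent produced by the telescoping is precisely $\exmaj(a)=|\{i:a_i>0\}|+\sum_{a_i\prec a_{i+1}}2i$ for the order $-1\prec-2\prec\cdots\prec1\prec2\prec\cdots$: the sum $\sum 2i$ is the comajor index of the level word $\bfi$ against $a$ (as in Theorem~\ref{fs-thm}, but doubled), while the correction $|\{i:a_i>0\}|$ records, letter by letter, whether the positive or the negative prong of the fork is used. I expect the main obstacle to be exactly this branch point. In types A and C the operator $\mathbf A(x)$ is a single linear chain $(1+xu_{n-1})\cdots$, so the Yang--Baxter relation and the statistic bookkeeping are essentially one‑dimensional shuffles; in type $D$ the chain forks, so one must handle the non‑commuting pair $u_{-1},u_2$ together with the commuting pair $u_{-1},u_1$ simultaneously, and verify the extra summand in $\exmaj$ and the doubling in the denominator by a direct computation near the fork. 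Convergence of the infinite products in the completion, and the fact that both sides are honest Laurent series in $q^{-1}$, follow from the degree estimates already noted after Theorem~\ref{intro-thm-A}.
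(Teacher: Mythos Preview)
Your outline follows the paper's strategy---nilCoxeter algebra, infinite product, principal specialization---but the crucial middle step is missing. You write that a Yang--Baxter/telescoping argument ``should introduce a scalar $\tfrac{1}{q^{2k}-1}$'' and the factors $q^{|a_j|}+1$, yet you give no mechanism for either. In the paper the work is done by an intermediate refactorization (Proposition~\ref{prop3}):
\[
\bSD \;=\; \prod_{j=-\infty}^0 \Bigl(\prod_{i=1}^{n-1} h_{-i}\bigl(x_{i+2j-1}+x_{2j-1}\bigr)\,\prod_{i=1}^{n-1} h_i\bigl(x_{i+2j}+x_{2j}\bigr)\Bigr),
\]
proved by alternating between $\bSD$ and its twist $\bSE$ under the diagram involution $r_1\leftrightarrow r_{-1}$: each pass peels off one layer and shifts the variables by~$1$, so two passes return to $\bSD$ with a shift by~$2$. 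This period-$2$ structure is the entire source of the doubling---after setting $x_i=q^{i-1}$ the outer factor becomes $q^{2(j-1)}$, and Lemma~\ref{formal-lem} applied with $q$ replaced by $q^2$ and $N=2n-2$ produces $(q^2-1)(q^4-1)\cdots(q^{2p}-1)$. Nothing in a direct Yang--Baxter commutation ``near the fork'' forces $q^{2k}-1$ rather than $q^k-1$; you have correctly guessed the answer but not supplied the mechanism.

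Your attribution of the numerators is also off. The factor $q^{|a_j|}+1$ does not come from a ``free $\pm$ sign'' on letters with $|a_j|\ge 2$; it comes from the combined argument $x_{i+2j}+x_{2j}$ in the refactored product, which specializes to $q^{2j-1}(q^i+1)$, so that $1+q^i$ enters as the weight $z_i$ in Lemma~\ref{formal-lem}. Similarly, the summand $|\{i:a_i>0\}|$ in $\exmaj$ records which half of the period-$2$ block a letter lands in (the single extra power of $q$ separating $x_{i+2j}+x_{2j}$ from $x_{i+2j-1}+x_{2j-1}$), not an ad hoc correction at the fork. Your plan is repairable, but the missing idea is precisely the $\bSD\leftrightarrow\bSE$ alternation and the resulting period-$2$ outer product; without it the bookkeeping you describe cannot be carried out.
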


\begin{example}
If $w =(1,-1)(4,-4) \in\WD_n$ then the set $\cR_D^\pm(w)$ has 32 elements,
 formed by adding signs to the letters in $a_1a_2a_3a_4a_5a_6  =3,2,1,1,2,3$ in all ways that give opposite signs to the two entries 
 with absolute value one. One can compute that
 \[ \ba
 \bSD_{ (1,-1)(4,-4)}  &= x_1 x_2 x_3 \bP_3 + (x_1 x_2 + x_1 x_3 + x_2 x_3)\bP_4 + (x_1 + x_2 + x_3)\bP_5 + \bP_6 
 \\
& = \dots +   
 x_0^4 x_1 x_3 +  2 x_{-1}x_0^3 x_2 x_3 + 2x_{-1}^2x_0 x_1 x_2 x_3  + 2x_{-2}x_0^2 x_1 x_2 x_3   \\
 &\hspace{9.6mm} + x_0^4 x_2 x_3 + 2x_{-1}x_0^2 x_1 x_2 x_3   \\
 &\hspace{9.6mm} + x_0^3 x_1 x_2 x_3
\ea
\]
where $\bP_d$ for $d > 0$ is the Schur $P$-function $ \frac{1}{2}\sum_{a=0}^d e_a(x_0,x_{-1},\dots) h_{d-a}(x_0,x_{-1},\dots)$. Using
the formula 
$\bP_d(q^{-1}, q^{-2}, \ldots)  = \tfrac{(q+1)(q^2+1)\cdots (q^{d-1}+1)}{(q-1)(q^2-1)\cdots (q^d-1)}$
one can check that
\begin{equation*}
  \bSD_{(1,-1)(4,-4)}(x_i \mapsto q^{i-1}) = \tfrac{q^{12}(q^2+1)}{(q-1)^3(q^3-1)^2(q^5-1)} = \dots +  46q^{-5} + 27q^{-4} + 15q^{-3} + 7q^{-2} + 3q^{-1} + 1,
\end{equation*}
which agrees with Theorem~\ref{intro-thm-D}.
  \end{example}

Setting $q = 1$ in Theorem~\ref{intro-thm-A} leads to surprising enumerative formulas involving reduced words, compatible sequences, and plane partitions \cite{FominKirillov}. By contrast, the power series $\bS_w$, $\bSB_w$, $\bSC_w$, and $\bSD_w$ do not converge upon specializing $x_i \mapsto 1$ for all $i$. It would be interesting to find variations of our formulas with clearer enumerative content.

The second half of this note contains a few other related results.
In Section~\ref{groth-sect}, we extend Theorems~\ref{intro-thm-A}, \ref{intro-thm-C}, and \ref{intro-thm-D} to identities for \emph{Grothendieck polynomials}.
Our proofs of these formulas are fairly straightforward adaptations of the 
algebraic methods in \cite{FominStanley,KirillovNaruse}. It is an interesting open problem to find bijective proofs of these identities along the lines of \cite{BHY}.

Our approach has one other notable application, which we discuss in Section~\ref{last-sect}.
There, we develop a simple alternate proof of 
the main result of \cite{HMP6}, which gives a pipe dream formula for certain \emph{involution Schubert polynomials}.
In fact, we are able to prove a more general $K$-theoretic formula, partially resolving an open question from \cite[\S6]{HMP6}.

\subsection*{Acknowledgements}

The first author was partially supported by Hong Kong RGC Grant ECS 26305218.
We thank Sergey Fomin for suggesting the problem of finding analogues of Macdonald's formulas for Schubert polynomials outside type A. 

\section{Principal specializations of Schubert polynomials}

This section contains our proofs of Theorems~\ref{intro-thm-C} and \ref{intro-thm-D}.
Throughout, we fix a positive integer $n$ and let $R$ be an arbitrary commutative ring containing 
the ring of formal power series $  \ZZ[[x_i : i < n]].$

\subsection{Nil-Coxeter algebras}

The algebra introduced in this section 
figures prominently in \cite{FominStanley} and in several of our arguments.
Let $(W,S)$ be a Coxeter system with length function $\ell$. Let 
 $\cN=\cN(W)$ be the $R$-module of all formal $R$-linear combinations of the symbols $u_w$ for $w \in W$.
 This module
has a unique $R$-algebra structure 
with bilinear multiplication satisfying 
\[ u_v u_w = \begin{cases} u_{vw} &\text{if }\ell(vw) = \ell(v) + \ell(w) \\ 0 &\text{if }\ell(vw) < \ell(v) + \ell(w)\end{cases}
\qquad\text{for $v,w \in W$.}
\]
Following \cite[\S2]{FominStanley}, we refer to $\cN$ as the \emph{nil-Coxeter algebra} of $(W,S)$.
Choose $x,y\in R$.
Given $s \in S$, define $h_s(x) := 1 + x u_s \in\cN.$
One checks that if $s,t \in S$ and $st=ts$ then 
\[ h_s(x)h_s(y) = h_s(x+y)\qquand  h_s(x)h_t(y) = h_t(y)h_s(x).\]
We will also need the following general identity, which is equivalent to \cite[Lem. 5.4]{FominStanley} after some minor changes of variables:

   \begin{lemma}[{\cite[Lem. 5.4]{FominStanley}}]\label{formal-lem}
 Let $\fkt_1,\fkt_2,\dots,\fkt_{N}$ be some elements of an $R$-algebra with identity $1$,
 and suppose $q$, $z_1$, $z_2$, \dots $z_{N}$ are formal variables. Then
 \[
 \prod_{j=-\infty}^0 \prod_{i=1}^{N} (1 + q^{j-1} z_i \fkt_i) = 
 \sum_{p\geq 0} 
 \sum_{a_1,a_2,\dots,a_p}
\tfrac{z_{a_1}z_{a_2}\cdots z_{a_p}}{(q-1)(q^{2}-1)\cdots (q^{p}-1)}  q^{\comaj(a)}\fkt_{a_1} \fkt_{a_2}\cdots \fkt_{a_p}  
 \]
 where $\comaj(a) := \sum_{a_i< a_{i+1}} i$ and 
 the coefficients on the right are viewed as Laurent series in $q^{-1}$.
 \end{lemma}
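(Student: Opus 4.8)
The plan is to prove the identity by expanding the left-hand product by distributivity and collecting like terms, obtaining the right-hand side summand by summand. Since an $R$-algebra identity is preserved under any $R$-algebra homomorphism, it suffices to work in the free associative $R$-algebra on noncommuting generators $\fkt_1,\dots,\fkt_N$, with scalars enlarged to formal Laurent series in $q^{-1}$ over $R[z_1,\dots,z_N]$; in that setting the coefficient of a given product $\fkt_{a_1}\cdots\fkt_{a_p}$ is unambiguous. The infinite product makes sense there because, as will be clear from the expansion below, the coefficient of each $\fkt_{a_1}\cdots\fkt_{a_p}$ is a sum of terms $q^{k}$ with $k\le -p$ and $k\to-\infty$, hence a well-defined Laurent series in $q^{-1}$.

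Next I would unwind a single term of the fully expanded product. Read left to right, the factors $1+q^{j-1}z_i\fkt_i$ occur ordered first by $j$ (from the most negative toward $0$) and then, within the block of a fixed $j$, by $i=1,2,\dots,N$. Selecting the summand $q^{j-1}z_i\fkt_i$ from the factors indexed $(j_1,i_1),\dots,(j_p,i_p)$, taken in the order in which they appear, contributes $q^{\sum_{k}(j_k-1)}\,z_{a_1}\cdots z_{a_p}\,\fkt_{a_1}\cdots\fkt_{a_p}$, where $a=(a_1,\dots,a_p):=(i_1,\dots,i_p)$ and the exponents satisfy $j_1\le\cdots\le j_p\le 0$ with the additional property that $j_k=j_{k+1}$ forces $a_k<a_{k+1}$, since two factors selected from one block are selected in increasing order of their index $i$. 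Writing $m_k:=-j_k$, the exponent vectors that arise for a fixed word $a$ are therefore precisely the integer sequences $m_1\ge m_2\ge\cdots\ge m_p\ge 0$ with $m_k>m_{k+1}$ whenever $a_k\ge a_{k+1}$. Consequently the coefficient of $\fkt_{a_1}\cdots\fkt_{a_p}$ on the left-hand side is $z_{a_1}\cdots z_{a_p}$ times $q^{-p}\sum_{(m_k)}q^{-\sum_{k}m_k}$, the inner sum running over all such sequences.

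It remains to evaluate this sum. Put $e_k:=|\{\,l:k\le l\le p-1,\ a_l\ge a_{l+1}\,\}|$ and substitute $m_k=n_k+e_k$; a direct check shows this is a bijection from the constrained sequences $(m_k)$ onto the weakly decreasing sequences $n_1\ge n_2\ge\cdots\ge n_p\ge 0$, i.e.\ partitions with at most $p$ parts, and that $\sum_k e_k=\sum_{a_l\ge a_{l+1}}l$. Using $\sum_{\lambda\,:\,\ell(\lambda)\le p}q^{-|\lambda|}=\prod_{k=1}^{p}(1-q^{-k})^{-1}=\frac{q^{\binom{p+1}{2}}}{(q-1)(q^2-1)\cdots(q^p-1)}$, the quantity $q^{-p}\sum_{(m_k)}q^{-\sum_k m_k}$ becomes $\frac{q^{\,\binom{p+1}{2}-p-\sum_{a_l\ge a_{l+1}}l}}{(q-1)(q^2-1)\cdots(q^p-1)}$. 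Since $\binom{p+1}{2}-p=\binom{p}{2}=\sum_{l=1}^{p-1}l$, the exponent in the numerator equals $\sum_{l=1}^{p-1}l-\sum_{a_l\ge a_{l+1}}l=\sum_{a_l<a_{l+1}}l=\comaj(a)$. Therefore the coefficient of $\fkt_{a_1}\cdots\fkt_{a_p}$ on the left equals $\frac{z_{a_1}\cdots z_{a_p}}{(q-1)(q^2-1)\cdots(q^p-1)}q^{\comaj(a)}$, and summing over all $p\ge 0$ and all words $a$ recovers the right-hand side. The only delicate point is keeping track of the order in which the noncommuting factors appear and converting it into the constraint on the $(m_k)$; after the shift $m_k=n_k+e_k$ the remainder is the standard partition generating-function computation, and one could alternatively deduce the result from \cite[Lem.\ 5.4]{FominStanley} via the commutation relations for the elements $h_s$.
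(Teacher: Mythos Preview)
Your proof is correct. The paper does not actually prove this lemma: it merely cites \cite[Lem.~5.4]{FominStanley} and remarks that the stated form follows ``after some minor changes of variables,'' so there is no in-paper argument to compare against. Your direct expansion---reading off the constraint $m_1\ge\cdots\ge m_p\ge 0$ with strict descents at positions where $a_k\ge a_{k+1}$, then shifting by the staircase $e_k$ to reduce to the standard partition generating function---is exactly the computation underlying the Fomin--Stanley lemma, and all the bookkeeping (the bijection $m_k=n_k+e_k$, the identity $\sum_k e_k=\sum_{a_l\ge a_{l+1}}l$, and the exponent simplification $\binom{p+1}{2}-p-\sum_{a_l\ge a_{l+1}}l=\comaj(a)$) checks out.
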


\subsection{Type B/C}\label{type-c-sect}

Here, let $\cN=\cN(\WBC_n)$ denote the  nil-Coxeter algebra of  type B/C Coxeter system
$(W,S) = (\WBC_n, \{t_0,t_1,\dots,t_{n-1}\})$
and define  
$h_i(x) :=1 + x u_{t_i} \in \cN$ for integers $-n < i <n$ and $x \in R$.
Recall that $t_i = t_{-i}$ so we always have $h_i(x) = h_{-i}(x)$.
Let
\be\label{ABC-eq}
\ba 
A_i(x) & := h_{n-1}(x)h_{n-2}(x)\cdots h_i(x), \\
B(x) &:= h_{n-1}(x)\cdots h_1(x)h_0(x)h_{-1}(x) \cdots h_{-n+1}(x), \\
C(x) &:= h_{n-1}(x)\cdots h_1(x)h_0(x)h_0(x)h_{-1}(x) \cdots h_{-n+1}(x),
\ea
\ee
and note that $h_0(x)h_0(x) = h_0(2x)$.
Finally consider the infinite products in $\cN$ given by 
\be\label{bSBC-def}
\bSB := \prod_{i=-\infty}^0 B(x_i) \prod_{i=1}^{n-1} A_i(x_i) 
\qquand
 \bSC := \prod_{i=-\infty}^0 C(x_i) \prod_{i=1}^{n-1} A_i(x_i). 
\ee
It straightforward to see that
$
\bSB = \sum_{w \in \WBC_n} \bSB_w\cdot u_w
$
and
$ \bSC = \sum_{w \in \WBC_n} \bSC_w \cdot u_w 
$.
Less trivially:





\begin{proposition}
\label{prop2}
 It holds that 
\[\bSB = \prod_{j=-\infty}^0 \(h_0(x_j) \prod_{i=1}^{n-1} h_i(x_{i+j}+x_j)\)
\quand
\bSC = \prod_{j=-\infty}^0  \prod_{i=0}^{n-1} h_i(x_{i+j}+x_j).\]
\end{proposition}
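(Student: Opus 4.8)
The plan is to verify both identities by a commutation argument inside the nil-Coxeter algebra $\cN(\WBC_n)$, exploiting the relations $h_s(x)h_s(y)=h_s(x+y)$, the fact that $h_s(x)$ and $h_t(y)$ commute whenever $s,t$ commute, and a ``braid-move'' lemma that lets one slide an $h_0$ or an $A_i$ factor past an adjacent block. The key structural observation is that $B(x)$ and $C(x)$ are built from the word $(n-1,n-2,\dots,1,0,-1,\dots,-(n-1))$ (resp. with a doubled $0$), and that $A_i(x)$ is the ``tail'' $h_{n-1}(x)\cdots h_i(x)$; since $h_i = h_{-i}$, the negative half of $B(x)$ is literally $A_1(x)$ read backwards, so $B(x) = h_{n-1}(x)\cdots h_1(x)\,h_0(x)\,(\text{reversed }A_1(x))$.

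First I would treat the type C case, which is cleaner. I would show by induction on $n$ that
\[
\prod_{i=1}^{n-1} A_i(y_i) \cdot C(x) = \Bigl(\prod_{i=0}^{n-1} h_i(y_{i}^{+}+\ldots)\Bigr)\cdot(\text{shifted tail}),
\]
more precisely that inserting one more factor $C(x_j)$ on the left of $\prod_{i=1}^{n-1}A_i(x_i)$ and then re-expanding produces one new block $\prod_{i=0}^{n-1} h_i(x_{i+j}+x_j)$ on the left while the remaining factors reassemble into $\prod_{i=1}^{n-1} A_i(x_i)$ again (with indices of the $x$'s shifted appropriately). The engine here is the standard computation from Fomin–Stanley: $A_i(a) h_{i-1}(b) = h_{i-1}(b) A_i(a)$ for $i\geq 2$ far-apart generators, together with the local identity $h_i(a)h_{i-1}(b)h_i(c)$-type manipulations and $h_i(a)h_i(b)=h_i(a+b)$, which collectively let the leading $h_{n-1}(x_j)h_{n-2}(x_j)\cdots$ of $C(x_j)$ pass through $\prod A_i(x_i)$, accumulating the shifts $x_{i+j}+x_j$ as it goes, until it meets and merges with the appropriate $h_i$'s. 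I expect this to be essentially \cite[proof of Thm. 2.4]{FominStanley} adapted to the type C generators, with the doubled $h_0$ contributing the $i=0$ term $h_0(x_j)h_0(x_j)=h_0(2x_j)=h_0(x_j+x_j)$ — consistent with the product formula since ``$x_{0+j}$'' reads as $x_j$.

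For type B the argument is the same except the single $h_0(x_j)$ does not merge with anything on its right the way the doubled one does; here I would use that the negative tail $h_{-1}(x_j)\cdots h_{-n+1}(x_j)=h_1(x_j)\cdots h_{n-1}(x_j)$ commutes through $\prod_{i\geq 1}A_i(x_i)$ after the positive part has been processed, leaving exactly the asserted factor $h_0(x_j)\prod_{i=1}^{n-1} h_i(x_{i+j}+x_j)$ and shifting the tail. Alternatively — and this is probably the slickest route — I would derive the type B identity directly from the type C one using the relation $\bSC = \bSC_w$-coefficients $=2^{\ell_0(w)}\bSB_w$, i.e. factor out the extra $h_0$: since $C(x)=h_{n-1}(x)\cdots h_1(x)h_0(x)\cdot B'(x)$ where $B'(x)=h_0(x)h_{-1}(x)\cdots h_{-n+1}(x)$ differs from $B(x)$ only by a leading $h_0(x)$, one can track where that extra $h_0$ lands and check it is precisely the discrepancy between $\prod_{i=0}^{n-1}h_i(x_{i+j}+x_j)$ and $h_0(x_j)\prod_{i=1}^{n-1}h_i(x_{i+j}+x_j)$. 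The main obstacle I anticipate is bookkeeping: getting the index shifts $x_{i+j}+x_j$ exactly right as the leading block threads through infinitely many $A_i$ factors, and confirming that the infinite products converge in the appropriate topology on $\cN$ over $R\supseteq\ZZ[[x_i:i<n]]$ so that the rearrangements are legitimate. Everything else is a finite Coxeter-algebra computation that reduces, via induction on $n$, to the rank-two relations already recorded before Lemma~\ref{formal-lem}.
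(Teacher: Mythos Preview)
Your core strategy is the paper's: peel off the rightmost $C$ (or $B$) factor, use the Fomin--Stanley commutation to push it through $\prod_i A_i(x_i)$, obtain a one-step recursion, and take a limit. Concretely, the paper writes $C(x_0)=A_1(x_0)\,h_0(2x_0)\,\widetilde A_1(x_0)$ with $\widetilde A_i(x):=h_i(x)\cdots h_{n-1}(x)$, invokes the fact that $h_{i-2}(x)$, $A_i(y)$, and $\widetilde A_i(z)$ all commute (\cite[Lem.~4.1]{FominStanley}) to get
\[
\bSC \;=\; \bSC(x_i\mapsto x_{i-1})\cdot \prod_{i=0}^{n-1} h_i(x_i+x_0),
\]
iterates $N$ times, and then sends $N\to\infty$ using $\bSC(x_i\mapsto x_{i-N})\to 1$.

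Two points to straighten out. First, the new block $\prod_{i=0}^{n-1}h_i(x_{i+j}+x_j)$ emerges on the \emph{right}, with the reconstituted (index-shifted) $\prod_i A_i$ on the left; your ``on the left'' has it backwards, and the wrong side would break the iteration since the next $C(x_{j-1})$ needs to sit adjacent to the $A$'s, not to the already-extracted block. Second, there is no induction on $n$ anywhere: the whole argument is a single application of the commutation lemma followed by the shift-and-limit step. Your proposed induction on $n$ and the ``rank-two relations'' reduction are a detour; the paper uses \cite[Lem.~4.1]{FominStanley} as a black box.

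On your alternative for type B: deducing it from type C via $\bSC_w = 2^{\ell_0(w)}\bSB_w$ is not a shortcut. That relation holds coefficientwise, but there is no scalar $c$ with $\bSC = c\,\bSB$ in $\cN$, and the extra $h_0(x_j)$ in $C(x_j)$ sits in the \emph{middle} of the word, so ``tracking where it lands'' amounts to redoing the commutation. The paper simply notes that the type B argument is identical with $h_0(2x_j)$ replaced by $h_0(x_j)$, which yields $h_0(x_j)$ in place of $h_0(x_{0+j}+x_j)$ in the $j$-th block.
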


\begin{proof}
We will just prove the formula for $\bSC$ since the other case is similar.
Let
\[\widetilde A_i(x) := h_i(x) h_{i+1}(x) \cdots h_{n-1}(x).\]
Since $A_i(x) = A_{i+1}(x) h_i(x)$ and $C(x) = A_1(x) h_0(x+x) \widetilde A_1(x)$,
 we have
\[
        \bSC =\prod_{i=-\infty}^{-1} C(x_i) \cdot A_1(x_0)h_0(x_0+x_0) \widetilde A_1(x_0) A_1(x_1)A_2(x_2)\cdots A_{n-1}(x_{n-1}).
\]
The elements $h_{i-2}(x)$, $A_i(y)$, and $\widetilde A_i(z)$ all commute by \cite[Lem. 4.1]{FominStanley}.
Using this fact and the identities $A_i(x) = A_{i+1}(x) h_i(x)$ and $\widetilde A_i(x) =  h_i(x)\widetilde A_{i+1}(x)$, 
it is easy
to show that 
\[
h_0(x_0+x_0)\widetilde A_1(x_0) A_1(x_1)A_2(x_2)\cdots A_{n-1}(x_{n-1})
=
\prod_{i=2}^{n-1} A_i(x_{i-1})\cdot \prod_{i=0}^{n-1} h_i(x_{i}+x_0).
\] 
Substituting this into our formula above gives
$ \bSC = \bSC(x_i \mapsto x_{i-1})  \prod_{i=0}^{n-1} h_i(x_i+ x_0)$
so by induction we have
$       \bSC = \bSC(x_i \mapsto x_{i-N})  \prod_{j=-N+1}^{0} \prod_{i=0}^{n-1} h_i(x_{i+j}+x_j)$
for all $N \geq 0$. But it is easy to see that  $\lim_{N \to \infty} \bSC(x_i \mapsto x_{i-N}) = 1$ as a limit of power series,
 so the result follows by sending $N \to \infty$.
\end{proof}

%

 We can now prove Theorem~\ref{intro-thm-C}.

\begin{proof}[Proof of Theorem~\ref{intro-thm-C}]
To obtain the desired formula, set $x_i =q^{i-1}$ in Proposition~\ref{prop2}, apply Lemma~\ref{formal-lem}  
with $N=n$, $z_i =1+q^{i-1}$, and $\fkt_i = u_{t_{i-1}}$, and then extract the coefficient of $u_w$.
\end{proof}

\subsection{Type D}\label{type-d-sect}

Now, let $\cN=\cN(\WD_n)$ denote the  nil-Coxeter algebra of $(W,S) = (\WD_n, \{r_{-1},r_1,\dots,r_{n-1}\})$
and define  
$h_i(x) :=1 + x u_{t_i} \in \cN$ for all $i \in \{ \pm 1, \pm 2, \dots, \pm(n-1)\}$ and $x \in R$.
Let
\be\label{AAD-eq}
\ba 
A_i(x) &: = h_{n-1}(x)h_{n-2}(x)\cdots h_i(x), \\
\widetilde A_i(x) & := h_i(x) h_{i+1}(x) \cdots h_{n-1}(x), \\
D(x) &:= h_{n-1}(x)\cdots h_1(x)h_{-1}(x) \cdots h_{-n+1}(x).
\ea
\ee
The Coxeter group $\WD_n$ has a unique automorphism $w\mapsto w^*$
that maps $r_i \mapsto r_{-i}$ for $1 \leq i < n$. This map extends by linearity to an $R$-algebra automorphism of $\cN$
with $u_w^* : = u_{w^*}$. 
We have $A_i(x)^* = A_i(x)$ for $1<i<n$ and $D(x)^* =D(x)$, while
$A_1(x)^* = h_{n-1}(x)h_{n-2}(x)\cdots h_2(x) h_{-1}(x).$
Consider the infinite products in $\cN$ given by
\be\label{bSD-def}
\bSD := \prod_{i=-\infty}^0 D(x_i)  \prod_{i=1}^{n-1} A_i(x_i) 
\qquand \bSE := \prod_{i=-\infty}^0 D(x_i)   \prod_{i=1}^{n-1} A_i(x_i)^*.\ee
It is easy to see that
$
\bSD = \sum_{w \in \WD_n} \bSD_w\cdot u_w
$
and
$ 
\bSE = \sum_{w \in \WD_n} \bSD_w\cdot u_{w^*}.
$
In addition:

\begin{proposition}
\label{prop3}
It holds that 
\[
  \bSD = \prod_{j=-\infty}^0 \(\prod_{i=1}^{n-1} h_{-i} (x_{i + 2j - 1}+x_{2j-1}) \prod_{i=1}^{n-1} h_i(x_{i +2j}+x_{2j})\).
  \]
\end{proposition}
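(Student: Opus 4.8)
The plan is to mimic the proof of Proposition~\ref{prop2}, pushing the type D ``reflection'' factors $D(x_i)$ past the singleton factors $A_i(x_i)$ one index at a time. The key structural input is that the factorization of $D(x)$ is analogous to that of $C(x)$ in type B/C, but now with \emph{two} central-type pieces $h_1(x)$ and $h_{-1}(x)$ playing the role that the doubled $h_0(x)h_0(x)$ plays in type C. Concretely, I would write $D(x) = A_1(x)\, h_{-1}(x)\, \widetilde A_1(x)$ using $A_1(x) = A_2(x) h_1(x)$ and $\widetilde A_1(x) = h_1(x)\widetilde A_2(x)$ together with the fact that $h_1$ and $h_{-1}$ commute (since $r_1 r_{-1} = r_{-1} r_1$ in $\WD_n$ — the nodes $1$ and $-1$ are not adjacent in the type D diagram). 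So each $D(x_j)$ contributes one ``$A$-type'' factor on the left, a central $h_{-1}$, and one ``$\widetilde A$-type'' factor on the right.

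First I would isolate the rightmost $D$-factor together with the $\prod_{i=1}^{n-1} A_i(x_i)$ tail: writing $D(x_0) = A_1(x_0) h_{-1}(x_0) \widetilde A_1(x_0)$, I need to commute $h_{-1}(x_0)\widetilde A_1(x_0) A_1(x_1)\cdots A_{n-1}(x_{n-1})$ into the form $\prod_{i=2}^{n-1} A_i(x_{i-1}) \cdot \bigl(\text{central block}\bigr)$. The commutation relations I'd use are exactly those from \cite[Lem. 4.1]{FominStanley} adapted to type D: $h_{-1}$ commutes with all $h_i$ for $i \geq 2$ (and with $h_1$), the elements $h_{i-2}(x)$, $A_i(y)$, $\widetilde A_i(z)$ commute, and the telescoping identities $A_i = A_{i+1}h_i$, $\widetilde A_i = h_i \widetilde A_{i+1}$. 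The outcome of this bookkeeping should be that the leftover central block is $\prod_{i=1}^{n-1} h_{-i}(x_i + x_0)$ followed by... — and here is where the index arithmetic gets delicate — I expect the single step of absorbing $D(x_0)$ to actually produce \emph{two} layers of $h$-factors, one involving the $h_{-i}$ and one involving $h_i$, with the variable shift $x_i \mapsto x_{i-1}$ happening \emph{twice} rather than once. That is, unlike type C where each $D(x_j)$-absorption shifts indices by $1$, in type D the starred vs.\ unstarred asymmetry forces a shift by $2$ per absorbed factor, which is exactly why the final formula has $x_{i+2j-1}$ and $x_{i+2j}$ and two separate products over $i$.

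The cleanest way to organize this is probably to pass through the auxiliary product $\bSE$, using the $*$-automorphism: since $A_i(x)^* = A_i(x)$ for $i > 1$ but $A_1(x)^*$ ends in $h_{-1}$ rather than $h_1$, one absorption step should transform $\bSC$-style into a product $\bSD(x_i \mapsto x_{i-1}) \cdot (\text{block})$ where the ``block'' and the need to re-star alternate, and two steps return to a genuine $\bSD$. So I would prove by induction on $N$ that
\[
\bSD = \bSD(x_i \mapsto x_{i-2N}) \prod_{j=-N+1}^{0}\Bigl(\prod_{i=1}^{n-1} h_{-i}(x_{i+2j-1}+x_{2j-1}) \prod_{i=1}^{n-1} h_i(x_{i+2j}+x_{2j})\Bigr),
\]
where the inductive step is two applications of the commutation massaging above (one producing the $h_{-i}$-layer with an odd shift, one producing the $h_i$-layer with an even shift). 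Then, as in Proposition~\ref{prop2}, $\lim_{N\to\infty}\bSD(x_i\mapsto x_{i-2N}) = 1$ as a limit of power series, and sending $N\to\infty$ gives the claim.

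The main obstacle I anticipate is getting the two-step shift exactly right: verifying that absorbing a single $D(x_j)$ genuinely peels off \emph{one} of the two interior products (say the $h_{-i}$ one, with the half-integer-looking index $2j-1$) while converting the remaining tail into a $\bSE$-type tail, so that the \emph{next} absorption peels off the $h_i$ product with index $2j$ and converts back to a $\bSD$-type tail. This alternation between $\bSD$ and $\bSE$ under single absorptions, combined with the off-by-one in the variable indexing of the two interior products, is the delicate combinatorial heart of the argument; the commutation lemmas themselves are routine once one checks that $1$ and $-1$ are non-adjacent in the type D diagram so that $h_1, h_{-1}$ commute.
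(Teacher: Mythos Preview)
Your overall strategy is exactly the paper's: factor $D(x_0)$, apply the commutation argument from Proposition~\ref{prop2} to peel off one layer, observe that this converts $\bSD$ into a shifted $\bSE$ (and vice versa), alternate to obtain the two-step inductive identity
\[
\bSD = \bSD(x_i \mapsto x_{i-2N}) \prod_{j=-N+1}^{0}\Bigl(\prod_{i=1}^{n-1} h_{-i}(x_{i+2j-1}+x_{2j-1}) \prod_{i=1}^{n-1} h_i(x_{i+2j}+x_{2j})\Bigr),
\]
and then take $N\to\infty$.

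There is one concrete error to fix. Your factorization $D(x) = A_1(x)\, h_{-1}(x)\, \widetilde A_1(x)$ is wrong: the right side has $2n-1$ factors (two copies of $h_1(x)$, one from $A_1$ and one from $\widetilde A_1$) while $D(x)$ has only $2(n-1)$. The correct identity, which the paper uses, is $D(x) = A_1(x)^*\, \widetilde A_1(x)$, equivalently $D(x) = A_1(x)\, h_{-1}(x)\, \widetilde A_2(x)$; both follow immediately from $h_1 h_{-1} = h_{-1} h_1$. With this correction the single absorption step gives precisely $\bSD = \bSE(x_i \mapsto x_{i-1}) \prod_{i=1}^{n-1} h_i(x_i + x_0)$, the companion identity holds for $\bSE$, and the rest of your argument goes through unchanged.
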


\begin{proof}
Since $A_i(x) = A_{i+1}(x) h_i(x)$ and $D(x) = A_1(x)^*  \widetilde A_1(x)$,
 we have
\[
        \bSD =\prod_{i=-\infty}^{-1} D(x_i) \cdot A_1(x_0)^* \widetilde A_1(x_0) A_1(x_1)A_2(x_2)\cdots A_{n-1}(x_{n-1}).
\]
Repeating  the argument in the proof of Proposition~\ref{prop2}, we
deduce that
$ \bSD = \bSE(x_i \mapsto x_{i-1})  \prod_{i=1}^{n-1} h_i(x_i+ x_0).$
An analogous identity holds for $\bSE$. 
Alternating these formulas gives
\[\bSD = \bSD(x_i \mapsto x_{i-2N})  \prod_{j=-N+1}^{0} \(\prod_{i=1}^{n-1} h_{-i}(x_{i+2j-1}+x_{2j-1})\prod_{i=1}^{n-1} h_i(x_{i+2j}+x_{2j})\)\]
for all $N \geq 0$. It is again easy to see that $\lim_{N \to \infty} \bSD(x_i \mapsto x_{i-2N}) = 1$ as a limit of formal power series,
so the result follows by sending $N \to \infty$.
\end{proof}

 We can now also prove Theorem~\ref{intro-thm-D}.

\begin{proof}[Proof of Theorem~\ref{intro-thm-D}]
 By Proposition~\ref{prop3} we have \[
 \bSD(x_i \mapsto q^{i-1}) = \prod_{j=-\infty}^0 \( \prod_{i=1}^{n-1} \(1 + q^{2(j-1)} \cdot (1+q^i) \cdot u_{r_{-i}}\)\cdot  \prod_{i=1}^{n-1} \(1 + q^{2(j-1)} \cdot q(1+q^i) \cdot u_{r_{i}}\)\).  \]
 To get the desired expression for $\bSD_w$, apply Lemma~\ref{formal-lem} with $q$ replaced by $q^2$ and $N=2n-2$ to the right side
 of the preceding identity,
using the parameters $z_i =1+q^{i}$, $ z_{n-1+i} =q(1+q^{i})$, $\fkt_i = u_{r_{-i}}$, and $\fkt_{n-1+i}= u_{r_{i}}$ for $1\leq i <n$.
Then extract the coefficient of $u_w$.
\end{proof}

\section{Principal specializations of Grothendieck polynomials}\label{groth-sect}

In this section we describe some extensions of Theorems~\ref{intro-thm-A}, \ref{intro-thm-C}, and \ref{intro-thm-D}
for \emph{Grothendieck polynomials} in classical types.
The identities proved here are more general but also more technical than the formulas sketched in the introduction.

\subsection{Id-Coxeter algebras}

Again let $(W,S)$ be an arbitrary Coxeter system with length function $\ell$. 
For the results in this section, we work in a generalization of 
the algebra $\cN(W)$.
Recall that $R$ is an arbitrary commutative ring containing $  \ZZ[[x_i : i < n]].$
From this point on, we fix an element $\beta \in R$.

Let $\cH = \cH(W)$ be the $R$-module of all formal $R$-linear combinations of the symbols $\pi_w$ for $w \in W$.
This module  has a unique $R$-algebra structure with bilinear multiplication satisfying 
\[ \pi_v \pi_w = \pi_{vw}\text{ if $\ell(vw)=\ell(v)+\ell(w)$}
\qquand
\pi_s^2 = \beta \pi_s
\]
for $v,w \in W$ and $s \in S$
 \cite[Def. 1]{KirillovNaruse}, which we refer to as
the \emph{id-Coxeter algebra} of $(W,S)$.
%
For $x,y\in R$ and $s \in S$, define 
\be x \oplus y := x + y + \beta xy
\qquand \bh_s(x) := 1 + x \pi_s.\ee
Then
$ \bh_s(x)\bh_s(y) = \bh_s(x\oplus y)$, and
if $st=ts$
then
 $ \bh_s(x)\bh_t(y) = \bh_t(y)\bh_s(x)$  \cite[Lem. 1]{KirillovNaruse}.

 
\subsection{Type A} 

Let $\bSym_n := \langle s_i : i <n\rangle$ be the Coxeter group of permutations $w \in S_\ZZ$ with $w(i)=i$ for all $i>n$.
In this section we write $\cH=\cH(\bSym_n)$ 
and set
$\pi_i := \pi_{s_i}\in\cH $ for integers $i<n$.
Define $\Hecke(w)$ for $w \in \bSym_n$ to be the set of words $a_1a_2\cdots a_N$ 
such that  
$ \pi_w = \beta^{N-\ell(w)} \pi_{a_1} \pi_{a_2}\cdots \pi_{a_N}$.
Recall the set $\CS(a)$ from Definition~\ref{compat-def}.

\begin{definition}
The \emph{backstable Grothendieck polynomial} of $w \in S_n \subsetneq \bSym_n$ is
\[\bG_w := \sum_{a \in \Hecke(w)} \sum_{\bfi \in \CS(a)} \beta^{\ell(\bfi)-\ell(w)} x_\bfi \in  \ZZ[\beta][[\dots,x_{-1},x_0,x_1,\dots,x_{n-1}]].\]
\end{definition}

The function $\fkG_w := \bG_w(\dots,0,0,x_1,x_2,\dots,x_{n-1})$ is the ordinary \emph{Grothendieck polynomial} of $w \in S_n$.
The power series $G_w:=\bG_w(\dots,x_{3}, x_{2},x_1,0,0,\dots,0)$ 
given by setting $x_i \mapsto 0$ for $i>0$ and $x_i \mapsto x_{1-i}$ for $i \leq 0$
 is a symmetric function in the $x_i$ variables, which is usually called the \emph{stable Grothendieck polynomial} of $w \in S_n$.
 
 Specializing $\beta\mapsto 0$ transforms $\bG_w\mapsto \bS_w$ from Section~\ref{intro-sect}. 
The Grothendieck polynomials $\fkG_w$ are closely related to the $K$-theory of flag varieties and Grassmannians \cite{Buch, MP2019a}.
We do not know of a similar geometric interpretation for the backstable Grothendieck polynomials $\bG_w$.

For $i <n$ and $x \in R$, let 
$
\bh_i(x) :=  1 + x \pi_{i} 
$
and
$
 \bA_i(x): = \bh_{n-1}(x)\bh_{n-2}(x) \cdots \bh_i(x)$.
Define
\be\label{bG-def}\bG :=\cdots \bA_{n-3}(x_{n-3})\bA_{n-2}(x_{n-2})\bA_{n-1}(x_{n-1}) = \prod_{i=-\infty}^{n-1} \bA_i(x_i) \in \cH.\ee
If $w \in S_n$ then the coefficient of $\pi_w$ in this expression is $\bG_w$.

\begin{proposition}
\label{prop1}
 It holds that $\bG = \prod_{j=-\infty}^0 \prod_{i=-\infty}^{n-1} \bh_i(x_{i+j})$.
\end{proposition}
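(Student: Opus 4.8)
The plan is to mimic the proof of Proposition~\ref{prop2} (the type B/C case), which is structurally the simplest of the three "braid-rearrangement" propositions and already carries all the necessary ideas. First I would record the two elementary facts that drive everything: the telescoping identity $\bA_i(x) = \bA_{i+1}(x)\bh_i(x)$, and the commutation relations coming from \cite[Lem. 4.1]{FominStanley} (adapted to the id-Coxeter setting via $\bh_s(x)\bh_t(y) = \bh_t(y)\bh_s(x)$ when $st=ts$), namely that $\bh_{i-2}(x)$, $\bA_i(y)$, and $\widetilde\bA_i(z)$ pairwise commute, where $\widetilde\bA_i(x) := \bh_i(x)\bh_{i+1}(x)\cdots\bh_{n-1}(x)$. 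Note that in type A there is no "reflection-through-zero" generator, so the analogue of the $h_0(x)h_0(x)=h_0(2x)$ step is absent — this case should actually be \emph{cleaner} than type B/C.

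Next I would isolate the key one-step rearrangement lemma: I claim that
\[
\widetilde\bA_1(x_0)\,\bA_1(x_1)\bA_2(x_2)\cdots\bA_{n-1}(x_{n-1})
= \prod_{i=1}^{n-1}\bA_i(x_{i-1})\cdot\prod_{i=-\infty}^{n-1}\bh_i(x_i+x_0),
\]
or more precisely the finite truncated version of this, proved by induction on $n$ using $\bA_i(x)=\bA_{i+1}(x)\bh_i(x)$ and $\widetilde\bA_i(x)=\bh_i(x)\widetilde\bA_{i+1}(x)$ together with the commutations above — exactly the "it is easy to show that" step in the proof of Proposition~\ref{prop2}. (Since $\bG$ involves the left-infinite product $\prod_{i=-\infty}^{n-1}\bA_i(x_i)$ rather than the finite product appearing in type A/C here, I would either extend $\widetilde\bA_i$ to an appropriate infinite product or, more cleanly, just peel off the factors with index $\geq 1$ and track how a single $\bh_0$-type step — here really a single $\bA_0$, $\bA_{-1}$, etc. — gets absorbed; the bookkeeping is the place to be careful). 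Feeding this back into the definition \eqref{bG-def} of $\bG$ yields the recursion $\bG = \bG(x_i\mapsto x_{i-1})\cdot\prod_{i=-\infty}^{n-1}\bh_i(x_i+x_0)$.

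Then I would iterate: by induction on $N\geq 0$,
\[
\bG = \bG(x_i\mapsto x_{i-N})\cdot\prod_{j=-N+1}^{0}\prod_{i=-\infty}^{n-1}\bh_i(x_{i+j}+x_j),
\]
and finally send $N\to\infty$, observing that $\lim_{N\to\infty}\bG(x_i\mapsto x_{i-N}) = 1$ as a limit of formal power series in $\ZZ[\beta][[x_i : i<n]]$ — since every monomial of $\bG$ of positive degree involves some $x_i$ with bounded index, shifting indices down by $N$ pushes it out of any fixed truncation. This gives $\bG = \prod_{j=-\infty}^0\prod_{i=-\infty}^{n-1}\bh_i(x_{i+j})$ after reindexing $x_{i+j}+x_j$... wait — here I should double-check the stated formula: the proposition claims $\prod_{j=-\infty}^0\prod_{i=-\infty}^{n-1}\bh_i(x_{i+j})$, \emph{not} with an "$+x_j$", so in type A the telescoping must actually collapse the $x_j$ summand, presumably because in the identity $x\oplus y$ and repeated use of $\bh_i(x)\bh_i(y)=\bh_i(x\oplus y)$ the extra $x_j$ terms recombine; I would verify this recombination carefully (it is the analogue of how $\prod_{i=-\infty}^0 B(x_i)$ telescopes in Proposition~\ref{prop2}), and this reconciliation of the "$+x_j$" is the one genuine subtlety.

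The main obstacle I anticipate is purely organizational rather than deep: managing the left-infinite products $\prod_{i=-\infty}^{n-1}$ rigorously (convergence in the $R$-module $\cH$ is fine since distinct $\pi_w$ appear only finitely often in each truncation, but one must say so), and getting the index shifts in the iterated rearrangement to land exactly on the stated formula — in particular confirming that the bookkeeping in the $\beta\neq 0$ (id-Coxeter) setting goes through verbatim, which it does because the only algebraic inputs used are $\bh_s(x)\bh_s(y)=\bh_s(x\oplus y)$, $\bh_s(x)\bh_t(y)=\bh_t(y)\bh_s(x)$ for $st=ts$, and the same braid-type commutations as in \cite[Lem. 4.1]{FominStanley}, none of which see $\beta$.
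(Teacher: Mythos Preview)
Your plan has a real gap, and you have already put your finger on it: the recursion you derive is
\[
\bG \;=\; \bG(x_i\mapsto x_{i-1})\cdot\prod_{i=-\infty}^{n-1}\bh_i(x_i\oplus x_0),
\]
with an extra $\oplus\, x_0$, whereas iterating the formula in the proposition requires
\[
\bG \;=\; \bG(x_i\mapsto x_{i-1})\cdot\prod_{i=-\infty}^{n-1}\bh_i(x_i).
\]
These are genuinely different identities, and there is no ``recombination'' that collapses the unwanted $x_0$'s: the factors $\bh_i(x_i\oplus x_0)$ and $\bh_i(x_i)$ differ already at first order in $x_0$. The reason you picked up the spurious $x_0$ is that you imported the $\widetilde A_1(x_0)$ factor from the type B/C argument, where it arises naturally from the palindromic shape of $B(x)$ and $C(x)$. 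In type A there is no such palindromic factor: $\bG$ is just $\prod_{i=-\infty}^{n-1}\bA_i(x_i)$, and nothing in that product produces a $\widetilde\bA$.

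The paper's argument is strictly simpler than the type B/C one, not a variant of it. Write each factor as $\bA_i(x_i)=\bA_{i+1}(x_i)\,\bh_i(x_i)$, so that
\[
\bG \;=\; \cdots\,\bA_1(x_0)\bh_0(x_0)\;\bA_2(x_1)\bh_1(x_1)\;\cdots\;\bA_{n-1}(x_{n-2})\bh_{n-2}(x_{n-2})\;\bh_{n-1}(x_{n-1}).
\]
Now only the elementary commutation ``$\bh_i(x)$ commutes with $\bA_{i+2}(y)$'' (since every $\bh_j$ in $\bA_{i+2}(y)$ has $|j-i|\geq 2$) is needed to slide all the peeled-off $\bh_i(x_i)$ to the far right, giving the correct recursion $\bG = \bG(x_i\mapsto x_{i-1})\cdot\prod_{i=-\infty}^{n-1}\bh_i(x_i)$ with no $x_0$ contamination. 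Iterating and taking $N\to\infty$ then works exactly as you described. So keep your overall shape (recursion, iterate, limit), but replace the $\widetilde\bA$ rearrangement step by this one-line peeling; the Lemma~4.1-style commutation of $\bA_i$ with $\widetilde\bA_i$ is not needed in type A.
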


\begin{proof}
We have
$
        \bG
        = \cdots \bA_1(x_0)\bh_0(x_0) \bA_2(x_1) \bh_1(x_1)\cdots \bA_{n-1}(x_{n-2})\bh_{n-1}(x_{n-1})
$
by definition.
As $\bh_i(x)$ and $\bA_{i+2}(y)$ commute, it follows that
$\label{bS-id1} \bG = \bG(x_i \mapsto x_{i-1})  \prod_{i=-\infty}^{n-1}\bh_i(x_i)$
so by induction
$       \bG = \bG(x_i \mapsto x_{i-N})  \prod_{j=-N+1}^{0} \prod_{i=-\infty}^{n-1} \bh_i(x_{i+j})$
for all $N \geq 0$. But we have $\lim_{N \to \infty} \bG(x_i \mapsto x_{i-N}) = 1$ as a limit of formal power series,
so the result follows by sending $N \to \infty$.
\end{proof}

%

We can now prove the following generalization of Theorem~\ref{intro-thm-A}.

 \begin{theorem}\label{groth-thm}
If $w \in S_n \subsetneq \bSym_n$ then
$
\bG_w(x_i \mapsto q^{i-1}) = \sum_{a \in \Hecke(w)}  \tfrac{\beta^{\ell(a)-\ell(w)}}{(q-1)(q^{2}-1)\cdots (q^{\ell(a)}-1)}q^{\sum a+\comaj(a)}
$
where the right hand expression is interpreted as a Laurent series in $q^{-1}$.
\end{theorem}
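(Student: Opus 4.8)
The plan is to mirror the proof of Theorem~\ref{intro-thm-C} almost verbatim, now using the id-Coxeter algebra $\cH(\bSym_n)$ in place of the nil-Coxeter algebra and the Kirillov--Naruse identity $\bh_s(x)\bh_s(y)=\bh_s(x\oplus y)$ in place of $h_s(x)h_s(y)=h_s(x+y)$. First I would set $x_i=q^{i-1}$ in the factorization $\bG=\prod_{j=-\infty}^0\prod_{i=-\infty}^{n-1}\bh_i(x_{i+j})$ from Proposition~\ref{prop1}. Each factor becomes $\bh_i(q^{i+j-1})=1+q^{i+j-1}\pi_i$. The key point is that only finitely many of the generators $\pi_i$ (those with $1\le i\le n-1$, i.e.\ $n-1$ of them) act nontrivially on $\pi_w$ for $w\in S_n$; more precisely, when extracting the coefficient of $\pi_w$ only words in $\Hecke(w)$ contribute, and these use letters in $\{1,\dots,n-1\}$. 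So after reindexing $i\mapsto i-1$ to match the convention $\fkt_i=\pi_{i-1}$, the double product has exactly the shape $\prod_{j=-\infty}^0\prod_{i=1}^{N}(1+q^{j-1}z_i\fkt_i)$ with $N=n-1$, $z_i=q^{i-1}$ (absorbing the shift), and $\fkt_i=\pi_{i-1}$.

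The second step is to apply Lemma~\ref{formal-lem} directly. This gives
\[
\bG(x_i\mapsto q^{i-1})=\sum_{p\ge 0}\sum_{a_1,\dots,a_p}\tfrac{z_{a_1}\cdots z_{a_p}}{(q-1)(q^2-1)\cdots(q^p-1)}q^{\comaj(a)}\pi_{a_1-1}\cdots\pi_{a_p-1},
\]
where the sum runs over all words with letters in $\{1,\dots,n-1\}$. I would then collect, for a fixed $w\in S_n$, the terms whose product $\pi_{a_1-1}\cdots\pi_{a_p-1}$ equals a scalar multiple of $\pi_w$. By the defining relations of $\cH$, we have $\pi_{a_1-1}\cdots\pi_{a_p-1}=\beta^{p-\ell(w)}\pi_w$ precisely when the shifted word $(a_1-1)\cdots(a_p-1)$ lies in $\Hecke(w)$, and the product is zero otherwise (this is exactly how $\Hecke(w)$ was defined). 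Writing $\ell(a)=p$ and noting $z_{a_1}\cdots z_{a_p}=q^{(a_1-1)+\cdots+(a_p-1)}=q^{\sum a - \ell(a)}$ where now $\sum a$ refers to the original (unshifted) word in $\cR$/$\Hecke$ convention — here I need to be a little careful about whether $\comaj$ and $\sum a$ are computed with respect to the letters $a_i\in\{1,\dots,n-1\}$ or the shifted letters. Since $\comaj(a)=\sum_{a_i<a_{i+1}}i$ depends only on the relative order of letters, it is unchanged by a global shift; and choosing the indexing so that $\Hecke$-words have letters matching $z_{a}=q^{a-1}$, we get the product of $z$'s equal to $q^{\sum a}\cdot q^{-\ell(a)}$...

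Actually the cleanest bookkeeping is to set $z_i=q^i$ and $\fkt_i=\pi_i$ for $i$ in the index range, absorbing the overall factor of $q^{-1}$ per variable into the infinite product by replacing $q^{j-1}$ appropriately, exactly as is done in the proof of Theorem~\ref{intro-thm-A} referenced in the text (which the paper asserts is a special case of this theorem). I would state: set $x_i=q^{i-1}$ in Proposition~\ref{prop1}, apply Lemma~\ref{formal-lem} with $N=n-1$, $z_i=q^{i}$, and $\fkt_i=\pi_i$ for $1\le i\le n-1$ (checking that the resulting powers of $q$ and the normalizing constants are as in the statement of the lemma, after the harmless reindexing), and extract the coefficient of $\pi_w$, using that this coefficient picks out the $\beta^{\ell(a)-\ell(w)}$-weighted sum over $a\in\Hecke(w)$ by the definition of $\Hecke(w)$. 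The main obstacle is purely the index/shift bookkeeping: making sure the exponent $\sum a + \comaj(a)$ and the denominator $(q-1)\cdots(q^{\ell(a)}-1)$ come out exactly right after translating between the $i$-indexing of $\bA_i$, the letters of $\Hecke(w)$, and the parameters fed into Lemma~\ref{formal-lem}. There is no real mathematical difficulty beyond Lemma~\ref{formal-lem} and Proposition~\ref{prop1}; it is a two-line proof once the substitutions are pinned down, which is why the paper flags it as "fairly straightforward."
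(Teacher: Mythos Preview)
Your approach is correct and matches the paper's proof essentially verbatim: truncate the product in Proposition~\ref{prop1} to $i\in\{1,\dots,n-1\}$ (since words contributing to $\pi_w$ for $w\in S_n$ use only those letters), set $x_i=q^{i-1}$, and apply Lemma~\ref{formal-lem} with $N=n-1$, $z_i=q^i$, $\fkt_i=\pi_i$. Two small corrections: the identity $\bh_s(x)\bh_s(y)=\bh_s(x\oplus y)$ is not used anywhere in this argument (it enters only in types B/C/D), and your remark that ``the product is zero otherwise'' is inaccurate---in $\cH$ the product $\pi_{a_1}\cdots\pi_{a_p}$ is never zero, it is $\beta^{p-\ell(v)}\pi_v$ for the Demazure product $v$, which simply lies outside $S_n$ whenever a letter $\le 0$ appears and hence contributes nothing to the coefficient of $\pi_w$.
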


\begin{proof}
If $w \in S_n$ then the coefficient of $\pi_w$ in $\bG$ is the same as the coefficient 
of $\pi_w$ in the truncated product $ \prod_{j=-\infty}^0 \prod_{i=1}^{n-1} \bh_i(x_{i+j})$.
This coefficient is  $\bG_w$, and the theorem follows by
applying Lemma~\ref{formal-lem} 
with $N=n-1$ and $z_i \fkt_i=q^i \pi_{s_i}$ to the latter expression.
\end{proof}

  There are \emph{Grothendieck polynomials} in the other classical types  \cite{KirillovNaruse}
 which generalize $\bSB_w$, $\bSC_w$, and $\bSD_w$ in the same way that $\bG_w$ generalizes $\bS_w$. 
We discuss these formal power series next.

\subsection{Type B/C}

In this section let $\cH=\cH(\WBC_n)$
 and  write $\pi_i := \pi_{t_i}\in\cH $ for $-n<i<n$.
Given  a permutation $w \in \WBC_n$, 
define $\Hecke_B^\pm(w)$ and  $\Hecke_C^\pm(w)$ to be the sets of  words $a_1a_2\cdots a_N$, with letters in
$ \{-n+1,\dots,-1,0,1,\dots,n-1\}$
and
$\{-n+1<\dots<-1<-0<0<1<\dots<n-1\}$, respectively,
such that  
$ \pi_w = \beta^{N-\ell(w)} \pi_{a_1} \pi_{a_2}\cdots \pi_{a_N} \in \cH$,
where $\ell(w)$ denotes the usual Coxeter length of $w$ and  $\pi_{-0} := \pi_0 \in \cH$.
Recall that we view $-0$ as a symbol distinct from $0$.

\begin{definition}
The \emph{type B/C Grothendieck polynomials} of $w \in \WBC_n$ are
\[\bGB_w := \sum_{\substack{a \in \Hecke_B^\pm(w) \\ \bfi \in \CS(a)}} \beta^{\ell(\bfi)-\ell(w)} x_\bfi
\quand
\bGC_w := \sum_{\substack{a \in \Hecke_C^\pm(w) \\ \bfi \in \CS(a)}} \beta^{\ell(\bfi)-\ell(w)} x_\bfi.
\]
\end{definition}

We may consider the finite sums
\[\ds
\bGB := \sum_{w \in \WBC_n} \bGB_w\cdot \pi_w \in \cH(\WBC_n)
\quand \ds \bGC := \sum_{w \in \WBC_n} \bGC_w \cdot \pi_w \in \cH(\WBC_n)
.\]
 Define $\bA_i(x)$, $\bB(x)$, and $\bC(x)$ as  in \eqref{ABC-eq} 
but with $h_i (x)$ replaced by 
\[
\bh_i(x) := 1 + x \pi_i \in \cH(\WBC_n)\qquad\text{for $-n<i<n$ and $x \in R$}.
\]
Then  $\bGB$ and $\bGC$ are given by the formulas in \eqref{bSBC-def} 
with $A_i$, $B$, $C$ replaced by $\bA_i$, $\bB$, $\bC$.
Comparing with \cite[Def. 9]{KirillovNaruse} shows that 
$\bGB_w$ and $\bGC_w$ are obtained from
Kirillov and Naruse's \emph{double Grothendieck polynomials}  $\cG_w^{\textsf{B}}(a,b;x)$ and $\cG_w^{\textsf{C}}(a,b;x)$ 
by setting $a_i \mapsto x_{i}$, $b_i\mapsto 0$, and $x_i \mapsto x_{1-i}$.

 \begin{proposition}\label{groth-prop}
  It holds that 
\[
\bGB = 
   \prod_{j=-\infty}^0 \( \bh_0(x_j) \prod_{i=1}^{n-1}\bh_i(x_{i+j} \oplus x_j)\)
\qquand
\bGC =
   \prod_{j=-\infty}^0  \prod_{i=0}^{n-1} \bh_i(x_{i+j}\oplus x_j).
\]
\end{proposition}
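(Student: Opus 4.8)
The plan is to mimic the proof of Proposition~\ref{prop2} verbatim, replacing the nil-Coxeter algebra $\cN(\WBC_n)$ by the id-Coxeter algebra $\cH(\WBC_n)$ and the maps $h_i(x) = 1 + x u_{t_i}$ by $\bh_i(x) = 1 + x \pi_i$. As before I will treat only the $\bGC$ case, since $\bGB$ is entirely analogous (just carry along the extra leading factor $\bh_0(x_j)$ and drop the doubled $h_0$). The two structural facts that made the earlier argument work have exact analogues here: first, $\bh_s(x)\bh_s(y) = \bh_s(x \oplus y)$ in place of $h_s(x)h_s(y) = h_s(x+y)$, which is \cite[Lem. 1]{KirillovNaruse}; second, the commutation relations among the $\bh_{i-2}(x)$, $\bA_i(y)$, and $\widetilde{\bA}_i(z)$. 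The commutations of $\bh_i(x)$ with $\bh_j(y)$ when $t_i t_j = t_j t_i$ follow from \cite[Lem. 1]{KirillovNaruse}, and since $\bA_i, \widetilde{\bA}_i$ are products of such $\bh$'s, the combinatorial commutation lemma \cite[Lem. 4.1]{FominStanley} applies formally in $\cH(\WBC_n)$ exactly as it did in $\cN(\WBC_n)$ (that lemma only uses the braid and commutation relations among the generators, which are identical in the two algebras).

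Concretely, set $\widetilde{\bA}_i(x) := \bh_i(x)\bh_{i+1}(x)\cdots\bh_{n-1}(x)$. Using $\bh_i(x)\bh_i(x) = \bh_i(x \oplus x)$ one has $\bC(x) = \bA_1(x)\bh_0(x \oplus x)\widetilde{\bA}_1(x)$, so that
\[
\bGC = \prod_{i=-\infty}^{-1}\bC(x_i)\cdot \bA_1(x_0)\bh_0(x_0 \oplus x_0)\widetilde{\bA}_1(x_0)\bA_1(x_1)\bA_2(x_2)\cdots\bA_{n-1}(x_{n-1}).
\]
Exactly as in the proof of Proposition~\ref{prop2}, commuting factors and repeatedly using $\bA_i(x) = \bA_{i+1}(x)\bh_i(x)$ and $\widetilde{\bA}_i(x) = \bh_i(x)\widetilde{\bA}_{i+1}(x)$ yields
\[
\bh_0(x_0 \oplus x_0)\widetilde{\bA}_1(x_0)\bA_1(x_1)\bA_2(x_2)\cdots\bA_{n-1}(x_{n-1}) = \prod_{i=2}^{n-1}\bA_i(x_{i-1})\cdot\prod_{i=0}^{n-1}\bh_i(x_i \oplus x_0).
\]
The key point making this identical to the nil-Coxeter computation is that wherever the earlier argument combined two copies of some $\bh_i$ via $h_i(y)h_i(y') = h_i(y+y')$, here we combine them via $\bh_i(y)\bh_i(y') = \bh_i(y \oplus y')$; in particular the doubled $h_0$ at index $0$ becomes $\bh_0(x_0 \oplus x_0)$ rather than $\bh_0(2x_0)$, and the shift producing $\bh_i(x_i \oplus x_0)$ in place of $\bh_i(x_i + x_0)$ is forced by the same bookkeeping. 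Substituting back gives $\bGC = \bGC(x_i \mapsto x_{i-1})\prod_{i=0}^{n-1}\bh_i(x_i \oplus x_0)$, hence by induction $\bGC = \bGC(x_i \mapsto x_{i-N})\prod_{j=-N+1}^{0}\prod_{i=0}^{n-1}\bh_i(x_{i+j}\oplus x_j)$ for all $N \geq 0$.

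Finally, $\lim_{N\to\infty}\bGC(x_i \mapsto x_{i-N}) = 1$ as a limit of formal power series in $R$ (each coefficient stabilizes, since the constant term of $\bGC$ is $1 = \pi_e$ and every other $\pi_w$-coefficient lies in the ideal generated by the $x_i$ with $i$ large negative after the shift), so sending $N \to \infty$ gives the claimed product formula. The main — really the only — subtlety is checking that \cite[Lem. 4.1]{FominStanley} and the rearrangement steps from the proof of Proposition~\ref{prop2} transfer word-for-word to $\cH(\WBC_n)$: this is routine because those manipulations never use the relation $\pi_s^2 = 0$, only braid/commutation relations and the substitution identity for merging equal generators, all of which hold in the id-Coxeter algebra with $+$ replaced by $\oplus$. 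I would state this transfer explicitly once and then simply say "repeating the argument in the proof of Proposition~\ref{prop2}" as the excerpt does for Proposition~\ref{prop3}.
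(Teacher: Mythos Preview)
Your proposal is correct and follows essentially the same approach as the paper: repeat the proof of Proposition~\ref{prop2} with $h_i$ replaced by $\bh_i$ and $+$ replaced by $\oplus$. The only minor difference is that the paper cites \cite[Lem.~3]{KirillovNaruse} directly for the commutation of $\bA_i(x)$ with ${\widetilde A}^{(\beta)}_i(y)$ in the id-Coxeter algebra, whereas you argue that the proof of \cite[Lem.~4.1]{FominStanley} transfers formally; either route is fine, though citing the Kirillov--Naruse lemma saves you from having to justify the transfer.
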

 
 \begin{proof}

 Since $\bA_i(x)$ commutes with
${\widetilde A}^{(\beta)}_i(x) := \bh_i(x)\bh_{i+1}(x)\cdots \bh_{n-1}(x)$ by \cite[Lem. 3]{KirillovNaruse},
the result follows by the same proof as Proposition~\ref{prop2}, \emph{mutatis mutandis}.
 \end{proof}

%

Given a word $a=a_1a_2\cdots a_p$ with $a_i \in \{-n+1<\dots<-1<-0<0<1<\dots<n-1\}$,
let $I(a)$ be the set of indices $ i \in [p]$ with $a_i \in \{1,2,\dots,n-1\}$
and define
\be \SigmaC(a) := \sum_{i \in I(a)} a_i \qquand \comajc(a) :=   \sum_{a_i \prec a_{i+1}} i\ee
where $\prec$ is the order $-0\prec 0\prec -1\prec 1\prec -2\prec 2\prec \dots$.
For example, if $a = -1,1,-2,1$ then 
$\SigmaC(a) = 1 + 1 = 2$ and $\comajc(a) =1 + 2 = 3$.

\begin{theorem}\label{groth-thm-C}
If $w \in \WBC_n$  
then the following identities hold:
\ben
\item[(a)]
$\bGB_w(x_i \mapsto q^{i-1}) = \sum_{a \in \Hecke_B^\pm(w)} \frac{\beta^{\ell(a) - \ell(w)}}{(q-1)(q^{2}-1)\cdots (q^{\ell(a)}-1)} q^{ \SigmaC(a) + \comajc(a)}$.
\item[(b)]
$\bGC_w(x_i \mapsto q^{i-1}) = \sum_{a \in \Hecke_C^\pm(w)} \frac{\beta^{\ell(a) - \ell(w)}}{(q-1)(q^{2}-1)\cdots (q^{\ell(a)}-1)} q^{ \SigmaC(a) + \comajc(a)}$.
\een
The right hand expressions in both parts are interpreted as Laurent series in $q^{-1}$.
\end{theorem}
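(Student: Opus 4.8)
The plan is to mirror the proof of Theorem~\ref{intro-thm-C} exactly, but now keeping track of the deformation parameter $\beta$ via the $\oplus$-operation. First I would take the two product formulas for $\bGB$ and $\bGC$ supplied by Proposition~\ref{groth-prop} and substitute $x_i \mapsto q^{i-1}$. For the $\bGC$ case this turns each factor $\bh_i(x_{i+j}\oplus x_j)$ into $1 + \bigl(q^{i+j-1}\oplus q^{j-1}\bigr)\pi_i = 1 + \bigl(q^{i+j-1}+q^{j-1}+\beta q^{i+2j-2}\bigr)\pi_i$. The key observation is that for a fixed $j$ (ranging over $j\le 0$, so write $j = 1 - \ell$ with $\ell \ge 1$ going to $\infty$, matching the index $j$ in Lemma~\ref{formal-lem} after a shift) the coefficient factors as $q^{j-1}(1+q^i) + \beta q^{2(j-1)} q^{i} = q^{j-1}\bigl(1 + q^i + \beta q^{j-1}q^i\bigr)$ — but the clean way is to note it equals $q^{j-1}\cdot\bigl((1+q^{i})\oplus_{?}\bigr)$; more simply, I would just write the factor as $1 + q^{j-1}z_i\pi_i + \beta q^{2(j-1)}w_i\pi_i$ and observe that this is not literally of the form $1 + q^{j-1}z_i\fkt_i$ that Lemma~\ref{formal-lem} wants.

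So the real content is that Lemma~\ref{formal-lem}, which is stated with factors $1 + q^{j-1}z_i\fkt_i$, still applies here: we need to absorb the $\beta$-term. The point is that in the id-Coxeter algebra $\pi_i^2 = \beta\pi_i$, so $1 + c\pi_i = \bh_i(c)$ and $\bh_i(c)\bh_i(c') = \bh_i(c\oplus c')$; hence $1 + \bigl(q^{i+j-1}\oplus q^{j-1}\bigr)\pi_i = \bh_i(q^{i+j-1})\bh_i(q^{j-1}) = (1 + q^{i+j-1}\pi_i)(1 + q^{j-1}\pi_i)$. Therefore the product over $i=0,\dots,n-1$ and $j\le 0$ of these factors is, after reindexing, a product of factors of the honest form $1 + q^{j-1}z_i\fkt_i$ — specifically with $N = 2n$ parameters: for $i=0,\dots,n-1$ one copy with $z = q^{i}$ and $\fkt_i = \pi_{i}$ coming from the ``$x_{i+j}$'' part (shifted index), and one copy with $z=1$, $\fkt = \pi_i$ coming from the ``$x_j$'' part. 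Wait — I should instead keep the factorization simple: $\bh_i(x_{i+j})\bh_i(x_j)$, so after $x_i\mapsto q^{i-1}$ we get $\bh_i(q^{i+j-1})\bh_i(q^{j-1})$, and now across all $i$ and all $j\le 0$ this is a product of terms $1 + q^{j'-1}\cdot q^{i}\cdot\pi_i$ together with $1 + q^{j-1}\cdot\pi_i$. Lemma~\ref{formal-lem} with $q$ unchanged, $N = 2n$ or $2n-1$ depending on whether the index $0$ appears once (type C has $\bh_0$ appearing from both the $x_{i+j}$-factor with $i=0$ and the $x_j$-factor, i.e. $\bh_0(q^{j-1})^2 = \bh_0(q^{j-1}\oplus q^{j-1})$), then yields the sum over words, and extracting the coefficient of $\pi_w$ gives the $\Hecke_C^\pm(w)$ sum with the exponent $\SigmaC(a)+\comajc(a)$ and the power of $\beta$ being $\ell(a)-\ell(w)$, exactly because collapsing $\pi_{a_1}\cdots\pi_{a_N}$ down to $\pi_w$ in $\cH$ introduces one factor of $\beta$ for each length drop. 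The $\bGB$ case is the same computation with the single extra $\bh_0(x_j)$ factor per $j$ contributing a $\pi_0$ with $z = 1$; this accounts for why $\Hecke_B^\pm$ uses letters $-0$ not doubled, consistent with $\bGC = 2^{\ell_0(w)}$-type relationship no longer literally holding but the word sets differing as described.

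The main obstacle — and the only place requiring care rather than bookkeeping — is matching the combinatorial statistics. I need to check that when Lemma~\ref{formal-lem} is applied with the alphabet ordered so that the ``$x_j$-copies'' of each $\pi_i$ precede or interleave correctly with the ``$x_{i+j}$-copies,'' the resulting $\comaj$ of the Lemma's words translates into $\comajc(a)$ under the order $-0\prec 0\prec -1\prec 1\prec-2\prec 2\prec\cdots$, and that $\sum(z$-exponents$)$ becomes $\SigmaC(a)$ — i.e.\ only the positive-letter copies (those with $z_i = q^{i}$, $i\ge 1$) contribute to $\SigmaC$, while the $z=1$ copies (indices $-0,0,-1,\dots$) contribute $q^0$. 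This is precisely the type-B/C analogue of the reindexing already carried out in the proof of Theorem~\ref{intro-thm-C} via the substitution $z_i = 1+q^{i-1}$, $\fkt_i = u_{t_{i-1}}$; here the only new feature is that $\oplus$ splits $1 + (q^{i+j-1}\oplus q^{j-1})\pi_i$ into two linear factors rather than being absorbed into a single $z_i = 1 + q^{i-1}$. I would therefore organize the proof as: (1) substitute $x_i\mapsto q^{i-1}$ in Proposition~\ref{groth-prop}; (2) use $\bh_i(a\oplus b) = \bh_i(a)\bh_i(b)$ to expand each factor into linear pieces; (3) apply Lemma~\ref{formal-lem} with $N = 2n-1$ (type C) or $N = 2n$ (type B), tracking how the ordered alphabet of $\fkt$'s matches the $\prec$-order; (4) extract the coefficient of $\pi_w$ and identify $\beta^{\ell(a)-\ell(w)}$, $\SigmaC(a)$, $\comajc(a)$. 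I expect steps (1)–(2) and (4) to be essentially immediate given the earlier results, and step (3)'s statistic-matching to be the one deserving an explicit sentence or two — exactly as in the already-given proofs, so I would simply write ``by the same reasoning as in the proof of Theorem~\ref{intro-thm-C}, \emph{mutatis mutandis}'' and spell out only the $z$-parameters and the final statistic correspondence.
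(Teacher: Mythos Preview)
Your proposal is correct and follows essentially the same route as the paper: substitute $x_i\mapsto q^{i-1}$ in Proposition~\ref{groth-prop}, use $\bh_i(a\oplus b)=\bh_i(a)\bh_i(b)$ to split each factor into two linear factors, apply Lemma~\ref{formal-lem} with the doubled alphabet, and extract the coefficient of $\pi_w$. One small bookkeeping slip: you have the values of $N$ swapped at the end --- it is $N=2n$ for type~C (each of the $n$ factors $\bh_i$, $0\le i\le n-1$, splits in two) and $N=2n-1$ for type~B (the single $\bh_0(x_j)$ does not split), and the paper's ordering of the $z$-parameters is $1,1,1,q,1,q^2,\dots,1,q^{n-1}$ with $\fkt$'s equal to $\pi_0,\pi_0,\pi_1,\pi_1,\dots,\pi_{n-1},\pi_{n-1}$, which is exactly what makes the Lemma's $\comaj$ match $\comajc$ under $-0\prec 0\prec -1\prec 1\prec\cdots$.
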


The second identity reduces to Theorem~\ref{intro-thm-C} when $\beta=0$
since the sum $\sum_a q^{ \SigmaC(a) + \comajc(a)}$ over all words $a=a_1a_2\cdots a_p \in \cR_C^\pm(w)$
with the same unsigned form is exactly the product  $(q^{|a_1|}+1)(q^{|a_2|}+1)\cdots (q^{|a_p|}+1) q^{\comaj(|a_1||a_2|\cdots |a_p|)}$.
\begin{proof}
Part (a) is similar so
we just prove (b).
As $\bh_i(x_{i+j}\oplus x_j) = \bh_i(x_j) \bh_i(x_{i+j})$,
we have
 \[
 \bGC(x_i \mapsto q^{i-1}) = \prod_{j=-\infty}^0\prod_{i=0}^{n-1} (1 + q^{j-1}\cdot \pi_i)( 1 + q^{j-1} \cdot q^i \cdot\pi_i)\]
 by Proposition~\ref{groth-prop}. The identity for $\bGC_w$
 follows by extracting the coefficient of $\pi_w$ from the right side after 
 applying Lemma~\ref{formal-lem} with $N=2n$ and with the parameters
 $z_1,z_2,\dots,z_{2n}$ and $\fkt_1,\fkt_2,\dots,\fkt_{2n}$ replaced by $1,1,1,q,1,q^2,\dots,1,q^{n-1}$
and $\pi_0,\pi_0,\pi_1,\pi_1,\dots,\pi_{n-1},\pi_{n-1}$, respectively.
\end{proof}

\subsection{Type D}

In this section let
$\cH=\cH(\WD_n)$
and $\pi_i := \pi_{r_i} \in \cH$. 
Given   
$w \in \WD_n$,
let $\Hecke_D^\pm(w)$ be the set of  words $a_1a_2\cdots a_N$ with letters in 
$[\pm(n-1)]:=\{\pm 1, \pm2,\dots, \pm (n-1)\}$
such that  
$ \pi_w = \beta^{N-\ell(w)} \pi_{a_1} \pi_{a_2}\cdots \pi_{a_N}\in \cH$, 
where $\ell(w)$ is the usual Coxeter length.

\begin{definition}
The \emph{type D Grothendieck polynomial} of $w \in \WD_n$ is
\[\bGD_w := \sum_{a \in \Hecke_D^\pm(w)} \sum_{\bfi \in \CS(a)} \beta^{\ell(\bfi)-\ell(w)} x_\bfi.\]
\end{definition}

We consider the sum
\[\bGD := \sum_{w \in \WD_n} \bGD_w \cdot \pi_w \in \cH(\WD_n).\]
 If we define $\bA_i(x)$  and $\bD(x)$ as  in \eqref{AAD-eq} 
but with $h_i(x) $ replaced by
\[
\bh_i(x) := 1 + x \pi_i \in \cH(\WD_n)\qquad\text{for $i\in [\pm (n-1)]$ and $x \in R$},
\]
then  $\bGD$ is given by the formula in \eqref{bSD-def} 
with $A_i$ and $D$ replaced by $\bA_i$ and $\bD$.
Comparing with \cite[Def. 9]{KirillovNaruse} shows that 
$\bGD_w$ is obtained from
Kirillov and Naruse's \emph{double Grothendieck polynomial}  $\cG_w^{\textsf{D}}(a,b;x)$  
by making the substitutions $a_i \mapsto x_{i}$, $b_i\mapsto 0$, and $x_i \mapsto x_{1-i}$.

 \begin{proposition}\label{groth-prop-d} It holds that 
$\ds
\bGD =\prod_{j=-\infty}^0 \(\prod_{i=1}^{n-1} \bh_{-i} (x_{i + 2j - 1}\oplus x_{2j-1}) \prod_{i=1}^{n-1} \bh_i(x_{i +2j}\oplus x_{2j})\).
$
\end{proposition}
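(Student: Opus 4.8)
The plan is to run the proof of Proposition~\ref{prop3} essentially verbatim in the id-Coxeter algebra $\cH(\WD_n)$, making the single systematic change that the ordinary sum $x+y$ of variables is everywhere replaced by the twisted sum $x\oplus y = x+y+\beta xy$. Concretely, I would keep the notation from the statement — $\bh_i(x)=1+x\pi_i$ and $\bA_i(x)=\bh_{n-1}(x)\cdots\bh_i(x)$, $\bD(x)=\bh_{n-1}(x)\cdots\bh_1(x)\bh_{-1}(x)\cdots\bh_{-n+1}(x)$, together with ${\widetilde A}^{(\beta)}_i(x):=\bh_i(x)\bh_{i+1}(x)\cdots\bh_{n-1}(x)$ — and use: the relations $\bh_i(x)\bh_i(y)=\bh_i(x\oplus y)$ and $\bh_s(x)\bh_t(y)=\bh_t(y)\bh_s(x)$ when $st=ts$ from \cite[Lem. 1]{KirillovNaruse}; the commutativity of $\bh_{i-2}(x)$, $\bA_i(y)$, and ${\widetilde A}^{(\beta)}_i(z)$ from \cite[Lem. 3]{KirillovNaruse}; and the purely Coxeter-theoretic identities $\bA_i(x)=\bA_{i+1}(x)\bh_i(x)$, ${\widetilde A}^{(\beta)}_i(x)=\bh_i(x){\widetilde A}^{(\beta)}_{i+1}(x)$, $\bD(x)=\bA_1(x)^*\,{\widetilde A}^{(\beta)}_1(x)$, $\bA_i(x)^*=\bA_i(x)$ for $1<i<n$, $\bD(x)^*=\bD(x)$, and $\bA_1(x)^*=\bh_{n-1}(x)\cdots\bh_2(x)\bh_{-1}(x)$, where $*$ is the $R$-algebra automorphism of $\cH(\WD_n)$ induced by $\pi_{r_i}\mapsto\pi_{r_{-i}}$ (equivalently $\bh_i(x)\mapsto\bh_{-i}(x)$), which fixes each $\bD(x)$. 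These are exactly the $K$-theoretic analogues of the inputs to Propositions~\ref{prop2} and \ref{prop3}, just as in the proof of Proposition~\ref{groth-prop}.

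First I would peel off the $x_0$-layer: using $\bD(x)=\bA_1(x)^*\,{\widetilde A}^{(\beta)}_1(x)$ and $\bA_i(x)=\bA_{i+1}(x)\bh_i(x)$,
\[
\bGD=\prod_{i=-\infty}^{-1}\bD(x_i)\cdot\bA_1(x_0)^*\,{\widetilde A}^{(\beta)}_1(x_0)\,\bA_1(x_1)\bA_2(x_2)\cdots\bA_{n-1}(x_{n-1}),
\]
and then the same rearrangement as in the proof of Proposition~\ref{prop2} — now using $\bh_i(x)\bh_i(y)=\bh_i(x\oplus y)$ in place of $h_i(x)h_i(y)=h_i(x+y)$, which is where $\oplus$ enters — gives
\[
{\widetilde A}^{(\beta)}_1(x_0)\,\bA_1(x_1)\cdots\bA_{n-1}(x_{n-1})=\prod_{i=2}^{n-1}\bA_i(x_{i-1})\cdot\prod_{i=1}^{n-1}\bh_i(x_i\oplus x_0).
\]
Since $\bA_i(x)^*=\bA_i(x)$ for $1<i<n$, the prefactor $\prod_{i\le-1}\bD(x_i)\cdot\bA_1(x_0)^*\prod_{i=2}^{n-1}\bA_i(x_{i-1})$ is exactly $\widetilde{\bGD}(x_i\mapsto x_{i-1})$, where $\widetilde{\bGD}:=\prod_{i=-\infty}^0\bD(x_i)\prod_{i=1}^{n-1}\bA_i(x_i)^* = \sum_{w}\bGD_w\,\pi_{w^*}$ is the $*$-image of $\bGD$; hence $\bGD=\widetilde{\bGD}(x_i\mapsto x_{i-1})\cdot\prod_{i=1}^{n-1}\bh_i(x_i\oplus x_0)$. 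Applying $*$ (which swaps $\bGD\leftrightarrow\widetilde{\bGD}$ and sends $\bh_i\mapsto\bh_{-i}$) yields the companion identity $\widetilde{\bGD}=\bGD(x_i\mapsto x_{i-1})\cdot\prod_{i=1}^{n-1}\bh_{-i}(x_i\oplus x_0)$. Substituting one into the other and iterating gives, for every $N\ge0$,
\[
\bGD=\bGD(x_i\mapsto x_{i-2N})\prod_{j=-N+1}^{0}\Bigl(\prod_{i=1}^{n-1}\bh_{-i}(x_{i+2j-1}\oplus x_{2j-1})\prod_{i=1}^{n-1}\bh_i(x_{i+2j}\oplus x_{2j})\Bigr),
\]
and since $\lim_{N\to\infty}\bGD(x_i\mapsto x_{i-2N})=1$ as a limit of formal power series, sending $N\to\infty$ proves the claim.

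The one step needing genuine care — the main obstacle — is the alternation bookkeeping: one must verify that applying $*$ to the $x_0$-peeling relation really produces the $\bh_{-i}$ factors in the displayed order, and that after one full $\bGD\to\widetilde{\bGD}\to\bGD$ cycle the shift $x_i\mapsto x_{i-2}$ aligns the arguments as $x_{i+2j}$ and $x_{i+2j-1}$. This index tracking is identical to what is already carried out for $\bSD$ in Proposition~\ref{prop3}; everything else is the mechanical substitution of $\bh_i$ for $h_i$ and of $\oplus$ for $+$, with \cite[Lem. 1]{KirillovNaruse} and \cite[Lem. 3]{KirillovNaruse} supplying the commutations — in short, the same proof as Proposition~\ref{prop3}, \emph{mutatis mutandis}.
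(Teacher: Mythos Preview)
Your proposal is correct and is precisely the approach the paper takes: its proof of Proposition~\ref{groth-prop-d} is a one-sentence instruction to repeat the proof of Proposition~\ref{prop3} with $\bh_i$ in place of $h_i$ and $\oplus$ in place of $+$, which is exactly what you have written out in detail. The inputs you invoke (the relations from \cite[Lem.~1]{KirillovNaruse}, the commutativity from \cite[Lem.~3]{KirillovNaruse}, the $*$-automorphism, and the peel--alternate--iterate--limit scheme) are the intended ones.
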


\begin{proof}
Similar to Proposition~\ref{groth-prop}, the result follows by repeating the proof of Proposition~\ref{prop3}
after adding ``$(\beta)$'' superscripts to all relevant symbols
 and
substituting $\bh_i(x)\bh_i(y) = \bh_i(x\oplus y)$ wherever the identity $h_i(x)h_i(y) = h_i(x+y)$ is used.
\end{proof}

%

To state an analogue of Theorem~\ref{intro-thm-D} for $\bGD_w$,
we must consider the ordered alphabet 
\[ \{ -1' \prec -1 \prec -2' \prec -2 \prec \dots \prec -n' \prec -n \prec 1' \prec 1 \prec 2' \prec 2 \prec \dots \prec n' \prec n\}.\]
If $w \in \WD_n$ then let $\PrimedHecke_D^\pm(w)$ denote the set of words in this alphabet which become elements of $\Hecke_D^\pm(w)$
when all primes are removed from its letters.
Given such a word $a=a_1a_2\cdots a_p$, let $J(a)$ be the set of indices $i \in [p]$ for which $a_i$ is unprimed,
and define
\[
\SigmaD(a) := \sum_{i \in J(a)} |a_i|\qquand 
\exmaj(a) := |\{ i  : a_i \in \{1',1,2',2,\dots\}\}| + \sum_{a_i \prec a_{i+1}} 2i.
\]
For example, if $a = 2',-1',-1,-3,2$ then $\SigmaD(a) = 1 + 3+2 = 6$ and $\exmaj(a) = 2 + (4 + 6 + 8)=20$.

\begin{theorem}
If $w \in \WD_n$  
then
\[
\bGD_w(x_i \mapsto q^{i-1}) = \sum_{a \in \PrimedHecke_D^\pm(w)} \tfrac{\beta^{\ell(a) - \ell(w)}}{(q^2-1)(q^{4}-1)\cdots (q^{2\ell(a)}-1)} q^{\SigmaD(a)+\exmaj(a)}
\]
where the right hand expression is interpreted as a Laurent series in $q^{-1}$.
\end{theorem}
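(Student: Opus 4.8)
The plan is to mimic the proof of Theorem~\ref{groth-thm-C}(b), which established the type B/C Grothendieck identity, with the single adjustment that we must pass from $q$ to $q^2$ exactly as in the proof of Theorem~\ref{intro-thm-D}. First I would start from Proposition~\ref{groth-prop-d} and substitute $x_i \mapsto q^{i-1}$. Using the factorization $\bh_i(x\oplus y) = \bh_i(x)\bh_i(y)$, each factor $\bh_{-i}(x_{i+2j-1}\oplus x_{2j-1})$ becomes $(1 + q^{2j-2}\cdot q^i\cdot \pi_{-i})(1 + q^{2j-2}\cdot \pi_{-i})$ and each factor $\bh_i(x_{i+2j}\oplus x_{2j})$ becomes $(1 + q^{2j-1}\cdot q^i\cdot \pi_i)(1 + q^{2j-1}\cdot \pi_i)$. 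Collecting powers of $q$, one sees that every exponent appearing is of the form $2(j-1)$ plus a shift lying in $\{0,1,i,i+1\}$, so the whole infinite product over $j \leq 0$ has the shape $\prod_{j=-\infty}^{0}\prod_{k=1}^{N}(1 + (q^2)^{j-1} z_k \fkt_k)$ for suitable scalars $z_k$ and algebra elements $\fkt_k$, with $N = 4(n-1)$.

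Next I would identify the parameters explicitly so as to make Lemma~\ref{formal-lem} (applied with $q$ replaced by $q^2$) produce exactly the claimed sum over $\PrimedHecke_D^\pm(w)$. The $2(n-1)$ generators $\pi_{-i}$ and $\pi_i$ each occur twice, once with a ``primed'' weight carrying an extra factor of $q^{|a_i|}$ and once with an ``unprimed'' weight of $1$; the overall factor of $q$ distinguishing $\bh_i$ from $\bh_{-i}$ accounts for the ``$+\,|\{i : a_i \in \{1',1,2',2,\dots\}\}|$'' term in $\exmaj$, and the relative ordering of the eight weight-classes $\{-i', -i, i', i\}$ for each magnitude must be arranged to match the alphabet $-1' \prec -1 \prec -2' \prec -2 \prec \dots \prec 1' \prec 1 \prec \dots$ so that $\comaj$ in Lemma~\ref{formal-lem} becomes $\sum_{a_i \prec a_{i+1}} i$, which after the $q \mapsto q^2$ substitution contributes $\sum 2i$ to the exponent. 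The product $z_{a_1}\cdots z_{a_p}$ then collects to $q^{\SigmaD(a)}$ since only the unprimed letters carry a nontrivial $z$, while the denominator $(q^2-1)(q^4-1)\cdots(q^{2p}-1)$ comes directly from the $q \mapsto q^2$ version of the $[p]_q!$-type denominator in Lemma~\ref{formal-lem}. Finally, extracting the coefficient of $\pi_w$ converts the sum over words $a_1a_2\cdots a_p$ with $\pi_{a_1}\cdots \pi_{a_p} = \beta^{p - \ell(w)}\pi_w$ into the sum over $\PrimedHecke_D^\pm(w)$, with the $\beta^{\ell(a) - \ell(w)}$ factor appearing exactly because each $\pi_s^2 = \beta\pi_s$ in $\cH$.

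The main obstacle I anticipate is bookkeeping the ordering: Lemma~\ref{formal-lem} fixes an order $1 < 2 < \dots < N$ on the symbols $\fkt_i$ and computes $\comaj$ relative to that order, so I must choose the indexing of the $4(n-1)$ parameters so that the induced order on letters agrees with the stated primed-signed alphabet, and must double-check that the ``$q$ shift'' separating $\bh_{-i}$ (weight $q^{2j-2}$) from $\bh_i$ (weight $q^{2j-1}$) is consistent with placing all the negative-magnitude letters before all the positive ones. Once the dictionary between parameters and the alphabet is pinned down, the remaining verification that $z_{a_1}\cdots z_{a_p}q^{\comaj(a)}$ (with $q \to q^2$) equals $q^{\SigmaD(a) + \exmaj(a)}$ is a routine exponent computation, exactly parallel to the remark following Theorem~\ref{groth-thm-C} and to the $q^2$-substitution in the proof of Theorem~\ref{intro-thm-D}. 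I would present the argument as: ``By Proposition~\ref{groth-prop-d}, $\bGD(x_i \mapsto q^{i-1}) = \prod_{j=-\infty}^0 \prod (\dots)$; apply Lemma~\ref{formal-lem} with $q$ replaced by $q^2$, $N = 4(n-1)$, and the parameters $z_k, \fkt_k$ listed above; then extract the coefficient of $\pi_w$,'' filling in only the explicit parameter list and a one-line check of the exponent identity.
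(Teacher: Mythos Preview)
Your proposal is correct and follows essentially the same route as the paper: start from Proposition~\ref{groth-prop-d}, specialize $x_i \mapsto q^{i-1}$, split each $\bh_{\pm i}(x\oplus y)$ into two factors, and then apply Lemma~\ref{formal-lem} with $q$ replaced by $q^2$ and $N=4n-4$, exactly as in the proofs of Theorems~\ref{groth-thm-C} and~\ref{intro-thm-D}. One small slip to fix in your write-up: you first say the \emph{primed} letters carry the extra $q^{|a_i|}$ and the unprimed ones carry weight $1$, but two sentences later (correctly) say that only the \emph{unprimed} letters carry a nontrivial $z$; the latter is what matches $\SigmaD(a)=\sum_{i\in J(a)}|a_i|$, so just make the parameter assignment consistent with that (primed $\leftrightarrow z=1$ or $z=q$, unprimed $\leftrightarrow z=q^{|a|}$ or $z=q^{|a|+1}$, according to sign).
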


As with Theorem~\ref{groth-thm-C}, this identity reduces to Theorem~\ref{intro-thm-D} when $\beta=0$.

\begin{proof}
The proof is similar to Theorem~\ref{groth-thm-C}.
Proposition~\ref{groth-prop-d} implies that $ \bGD(x_i \mapsto q^{i-1}) $ is 
 \[
 \prod_{j=-\infty}^0
 \(
 \prod_{i=1}^{n-1} (1 + q^{2(j-1)} \cdot \pi_{-i})  (1 + q^{2(j-1)} \cdot q^i\cdot \pi_{-i})
 \prod_{i=1}^{n-1} 
  (1 + q^{2(j-1)} \cdot q\cdot \pi_{i})  (1 + q^{2(j-1)} \cdot q^{i+1}\cdot \pi_{i})
 \).
 \]
The identity for $\bGD_w$
 follows by extracting the coefficient of $\pi_w$ from this expression after 
 applying Lemma~\ref{formal-lem} with $q$ replaced by $q^2$ and with $N=4n-4$.
 When applying the lemma,
 we set the parameters
 $z_1,z_2,\dots,z_{2n-2}$ (respectively, $z_{2n-1},z_{2n},\dots,z_{4n-4}$)  to
 $1,q,1,q^2,1,q^3\dots$ (respectively, $q,q^2, q,q^3,q,q^4\dots$),
 while taking $\fkt_1,\fkt_2,\dots,\fkt_{2n-2}$ (respectively, $\fkt_{2n-1},\fkt_{2n},\dots,\fkt_{4n-4}$) to be
$\pi_{-1},\pi_{-1},\pi_{-2},\pi_{-2},\dots$ (respectively, $\pi_1,\pi_1,\pi_2,\pi_2,\dots$).
\end{proof}

 \section{Involution Grothendieck polynomials}
\label{last-sect}

This final section is something of a digression.
Here, we reuse the techniques introduced above to give a simple proof of a new formula 
for certain \emph{involution Grothendieck polynomials}.

In this section, 
we let $\cH=\cH(S_n)$ be the id-Coxeter algebra for the finite Coxeter system
$(W,S)=(S_n, \{s_1,s_2,\dots,s_{n-1}\})$,
and write $\pi_i := \pi_{s_i} \in \cH$.
Let 
\[\bI := \left\{w  \in S_n : w=w^{-1}\right\}
\qquand \bIfpf := \left\{ w^{-1} \bFPF w : w \in S_n\right\}\]
 where $\bFPF=\cdots(1,2)(3,4)(5,6)\cdots$ denotes the permutation of $\ZZ$ mapping $i \mapsto  i -(-1)^i.$
The sets $\bI$ and $\bIfpf$ are always disjoint, although when $n$ is even the elements of $\bIfpf$ are naturally
in bijection with the fixed-point-free elements of $\bI$.

Let $\IdInvol$ and $\IdFixed$ denote the free $R$-modules consisting of all $R$-linear combinations
of the symbols $m_z$  for  $z\in\bI$ and $z\in\bIfpf$, respectively.
These sets have unique right $\cH$-module structures (see \cite[\S1.2 and \S1.3]{Marberg2019a})
satisfying, for each integer $1\leq i<n$,
\[\ba
    m_z\pi_i &= \begin{cases} m_{zs_i} & \text{if $z(i) < z(i+1)$ and $zs_i = s_i z$}\\
        m_{s_i z s_i} & \text{if $z(i) < z(i+1)$ and $zs_i \neq s_i z$}\\
        \beta m_{z} & \text{if $z(i) > z(i+1)$}
    \end{cases}
    &&\quad\text{for $z \in \bI$}
\ea\]
and
\[\ba
    m_z\pi_i &= \begin{cases}
        m_{s_i z s_i} & \text{if $z(i) < z(i+1)$}\\
        \beta m_{z} & \text{if $i+1\neq z(i) > z(i+1)\neq i$}\\
      0 & \text{if $i+1= z(i) > z(i+1)= i$}
    \end{cases}
    &&\quad\text{for $z \in \bIfpf$.}
    \ea
\]
An \emph{involution Hecke word} 
for $z \in \bI$
is a word $a_1a_2\cdots a_p$
such that \[ m_1 \pi_{a_1}\pi_{a_2}\cdots \pi_{a_p} = \beta^N m_z \in \IdInvol\quad\text{for some integer $N\geq 0$.}\]
To avoid excessive subscripts, define 
\[ \mFPF := m_{\bFPF} \in \IdFixed.\]
An \emph{involution Hecke word}
for $z \in \bIfpf$ is a word $a_1a_2\cdots a_p$ 
such that  \[ \mFPF  \pi_{a_1}\pi_{a_2}\cdots \pi_{a_p} = \beta^N m_z \in \IdFixed\quad\text{for some integer $N\geq 0$,}\]
assuming $\beta^N \neq 0$ for $N\geq 0$.
Neither of these definitions depends on $\beta$, but in the fixed-point-free case we wish to exclude
words $a_1a_2\cdots a_p$ for which $z := s_{a_{i-1}}\cdots s_{a_2}s_{a_1} \bFPF s_{a_1}s_{a_2}\dots s_{a_{i-1}}$ has $a_i  +1 = z(a_i) >z(a_i+1) = a_i$ for some $i$.

Let $\InvolHecke(z)$ denote the set of involution Hecke words for 
 an element $z$ in $ \bI$ or $ \bIfpf$.
 This set was denoted as either
 $\iH(z)$ for $z \in \bI$ or $\cHfpf(z)$ for $z \in \bIfpf$ in \cite{Marberg2019a}.
Also define 
\[\ellhat(z) = \min\{ \ell(a) : a \in \InvolHecke(z)\}.\]
For an explicit formula for $\ellhat$, see \cite[Eq.\ (5.1)]{Marberg2019a}.

\begin{example} \label{ex:inv-hecke}
If $y =  s_3s_2s_3=s_2s_3s_2 =(2,4)\in \bI$, then
  $\InvolHecke(y)$ is the set of all finite words on the alphabet $\{2,3\}$ in which $2$ and $3$ both appear. If $w = (2,3,4)=s_2 s_3 \in S_n$ and $z = w^{-1} \bFPF w = \cdots (-3,-2)(-1,0)(1,4)(2,3)(5,6)(7,8) \cdots \in \bIfpf$, then $\InvolHecke(z)$ is the set of words obtained by prepending $2$ to a nonempty word on $\{1,3\}$. In either case $\ellhat(y)=\ellhat(z)=2$.
\end{example}

Our final theorem concerns these analogues of $\fkG_w$:

\begin{definition}
The \emph{involution Grothendieck polynomial} of $z \in \bI\sqcup \bIfpf$ is
\[\biG_z := \sum_{a \in \InvolHecke(z)} \sum_{0<\bfi \in \CS(a)} \beta^{\ell(\bfi)-\ellhat(z)} x_\bfi \in \ZZ[\beta][x_1,x_2,\dots,x_{n-1}].\]
\end{definition}

If $n$ is even and $z \in \bIfpf$ then $\biG_z$ coincides with the \emph{symplectic Grothendieck polynomials}
$\Gfpf_z$ studied in \cite{MP2019a,MP2019b}.
The paper \cite{MP2019a} also introduces certain \emph{orthogonal Grothendieck polynomials} $\iG_z$ indexed by 
$z \in \bI$, but these are generally not the same as $\biG_z$.
However, $\biG_z$ does specialize when $\beta=0$ to both kinds of \emph{involution Schubert polynomials} $\iS_z$ and $\Sfpf_z$
considered in \cite{HMP1,HMP6}.

Because $\IdInvol$ and $\IdFixed$ are $\cH$-modules,
there exists for each $z \in \bI\sqcup\bIfpf$ 
 a set $\A(z) \subset S_n$ (see \cite[\S2.1]{Marberg2019a}) such that 
\be\label{big-atoms-eq} \InvolHecke(z) = \bigsqcup_{w \in \A(z)} \Hecke(w)
\quand
\biG_z = \sum_{w \in \A(z)} \beta^{\ell(w) -\ellhat(z)} \fkG_w
\ee
where   
$\fkG_w := \bG_w(\dots,0,0,x_1,x_2,\dots,x_{n-1}) =  \sum_{a \in \Hecke(w)} \sum_{0<\bfi \in \CS(a)} \beta^{\ell(\bfi)-\ell(w)} x_\bfi$
for $w \in S_n$.

Again let $\bh_i(x) := 1+ x\pi_i \in \cH$ and define
\be
\bA_i(x) := \bh_{n-1}(x)\bh_{n-2}(x)\cdots \bh_i(x) \quand
{\widetilde A}^{(\beta)}_i(x) := \bh_i(x) \bh_{i+1}(x)\cdots \bh_{n-1}(x)
\ee
for integers $1\leq i < n$ and $x \in R$. Then consider the finite product
\be \fkG := \bA_1(x_1)\bA_2(x_2)\cdots \bA_{n-1}(x_{n-1}) = \sum_{w \in S_n} \fkG_w \cdot \pi_w \in \cH. \ee
Next let $\biG := m_1 \fkG $ and  $\biGfpf := \mFPF \fkG $.
It is evident from \eqref{big-atoms-eq} that
\[ \biG= \sum_{z \in \bI} \biG_z \cdot m_z \in \IdInvol
\quand
\biGfpf   = \sum_{z \in \bIfpf} \biG_z \cdot m_z \in \IdFixed.
\]
Proposition~\ref{prop1} is inefficient for computing $\biG_z$ since 
while $\fkG$ contains ${n \choose 2}$ factors $\bh_i(x_i)$, it turns out that any $m_z$ 
 can be written in the form $m_z\pi_{a_1}\pi_{a_2} \cdots \pi_{a_p}$ where $ p \leq \binom{n_1}{2} +\binom{n_2}{2}$
for $n_1= \lceil \frac{n+1}{2} \rceil$ and $n_2= \lfloor \frac{n+1}{2} \rfloor $. 
We can derive an involution version of Proposition~\ref{prop1}, however.


\begin{lemma}
\label{groth-commute-lem2}
For any integer $1\leq i < n$ and elements $x_i,\dots,x_{n-1},y \in R$ it holds that 
\[{\widetilde A}^{(\beta)}_i(y) \bA_i(x_{i})\bA_{i+1}(x_{i+1})\cdots \bA_{n-1}(x_{n-1}) = \prod_{j=i+1}^{n-1} \bA_{j}(x_{j-1}) \cdot \prod_{j=i}^{n-1} \bh_j(x_j\oplus y). \]
\end{lemma}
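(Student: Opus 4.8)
The plan is a short descending induction on $i$, driven by the single ``crossing'' identity
\[ {\widetilde A}^{(\beta)}_i(y)\,\bA_i(x) = \bA_{i+1}(x)\,\bh_i(x\oplus y)\,{\widetilde A}^{(\beta)}_{i+1}(y) \qquad(1\le i<n,\ x,y\in R), \]
which I will call $(\star)$. The only ingredient in $(\star)$ that is not a formal manipulation is the fact that $\bA_i(y)$ and ${\widetilde A}^{(\beta)}_i(z)$ commute for \emph{arbitrary}, possibly distinct, arguments $y$ and $z$. This is the $\beta$-deformation of \cite[Lem. 4.1]{FominStanley}; it follows from the braid and commutation relations among the $\pi_i$ together with the parametrized Yang--Baxter identity $\bh_i(a)\bh_{i+1}(a\oplus b)\bh_i(b)=\bh_{i+1}(b)\bh_i(a\oplus b)\bh_{i+1}(a)$, and is also essentially \cite[Lem. 3]{KirillovNaruse}. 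Granting it, $(\star)$ is immediate: peeling $\bA_i(x)=\bA_{i+1}(x)\bh_i(x)$ and ${\widetilde A}^{(\beta)}_i(y)=\bh_i(y){\widetilde A}^{(\beta)}_{i+1}(y)$ and then using $\bh_i(x)\bh_i(y)=\bh_i(x\oplus y)$ gives
\[ {\widetilde A}^{(\beta)}_i(y)\bA_i(x) = \bA_i(x){\widetilde A}^{(\beta)}_i(y) = \bA_{i+1}(x)\,\bh_i(x)\,\bh_i(y)\,{\widetilde A}^{(\beta)}_{i+1}(y) = \bA_{i+1}(x)\,\bh_i(x\oplus y)\,{\widetilde A}^{(\beta)}_{i+1}(y). \]

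For the lemma itself, I would induct downward on $i$. The base case $i=n-1$ is clear: the left side is $\bh_{n-1}(y)\bh_{n-1}(x_{n-1})=\bh_{n-1}(x_{n-1}\oplus y)$ (using that $\oplus$ is commutative), and the right side is the empty product $\prod_{j=n}^{n-1}\bA_j(x_{j-1})$ times $\bh_{n-1}(x_{n-1}\oplus y)$. For $i\le n-2$, I would apply $(\star)$ to the two leftmost factors of the left-hand side and then the inductive hypothesis (for $i+1$, with the same $y$) to the block ${\widetilde A}^{(\beta)}_{i+1}(y)\bA_{i+1}(x_{i+1})\cdots\bA_{n-1}(x_{n-1})$, obtaining
\[ {\widetilde A}^{(\beta)}_i(y)\bA_i(x_i)\cdots\bA_{n-1}(x_{n-1}) = \bA_{i+1}(x_i)\,\bh_i(x_i\oplus y)\,\prod_{j=i+2}^{n-1}\bA_j(x_{j-1})\cdot\prod_{j=i+1}^{n-1}\bh_j(x_j\oplus y). \]
Since $\bh_i$ commutes with $\bh_k$ whenever $|i-k|\ge2$, the factor $\bh_i(x_i\oplus y)$ commutes past every $\bA_j(x_{j-1})$ with $j\ge i+2$; sliding it to the right and using $\bA_{i+1}(x_i)\prod_{j=i+2}^{n-1}\bA_j(x_{j-1})=\prod_{j=i+1}^{n-1}\bA_j(x_{j-1})$ together with $\bh_i(x_i\oplus y)\prod_{j=i+1}^{n-1}\bh_j(x_j\oplus y)=\prod_{j=i}^{n-1}\bh_j(x_j\oplus y)$ produces exactly the right-hand side of the lemma.

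I expect the main obstacle to be purely bookkeeping: keeping the index shifts in $(\star)$ and in the final regrouping consistent, and correctly invoking the commutation of $\bA_i$ with ${\widetilde A}^{(\beta)}_i$ at unequal arguments, which is the one genuinely nontrivial input and ultimately reduces to the Yang--Baxter relation already used in \cite{FominStanley,KirillovNaruse}. Otherwise the argument runs in close parallel to the proof of Proposition~\ref{prop2}.
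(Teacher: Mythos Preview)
Your argument is correct and is essentially the same as the paper's; the paper simply says to repeat the proof of \cite[Lem.~4.1]{FominStanley} with $A_i,\widetilde A_j,h_k$ replaced by $\bA_i,{\widetilde A}^{(\beta)}_j,\bh_k$ and then apply the anti-automorphism $\pi_w\mapsto\pi_{w^{-1}}$, which amounts to exactly the descending induction you spell out, driven by the commutation of $\bA_i$ and ${\widetilde A}^{(\beta)}_i$. The only cosmetic difference is that you avoid the anti-automorphism step by working directly in the orientation needed, whereas the paper's one-line proof defers the details to Fomin--Stanley's original argument.
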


\begin{proof}
Repeat the proof of \cite[Lem. 4.1]{FominStanley} with the symbols $A_i$, $\widetilde A_j$, $h_k$ replaced by
$\bA_i$, ${\widetilde A}^{(\beta)}_j$, $\bh_k$, and then apply the algebra anti-automorphism of $\cH$ that maps $\pi_w \mapsto \pi_{w^{-1}}$ to both sides.
\end{proof}

For integers $i>j>0$, define
$ x_{i\oplus j} = x_{j\oplus i} := x_i\oplus x_j =x_i + x_j + \beta x_ix_j$ and $  x_{j\oplus j} := x_j.$

 \begin{proposition} 
\label{prop:iSformula} 
The following identities hold:
\ben
\item[(i)] We have 
$\biG = \prod_{i=1}^{n-1}\prod_{j=\min(i,n-i)}^1 \bh_{i+j-1}(x_{i\oplus j}).$
\item[(ii)] If $n$ is even then
$\biGfpf = \prod_{i=2}^{n-1}\prod_{j=\min(i-1,n-i)}^1 \bh_{i+j-1}(x_{i \oplus j}).$
\een
\end{proposition}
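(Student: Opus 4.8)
The plan is to prove (i) by iterating Lemma~\ref{groth-commute-lem2} to rewrite $\biG = m_1 \fkG$, and then deduce (ii) by the same argument with $m_1$ replaced by $\mFPF$. The key observation is that the module $\IdInvol$ and the ``doubling'' relations satisfied by $m_1\pi_i$ mirror the structure of the type B/C computation: $m_1\pi_i = m_{s_i}$ with $s_i = s_i^{-1}$, and more usefully, for the action on $m_1$ there is an identity of the shape $m_1 \bA_i(y) = m_1 {\widetilde A}^{(\beta)}_i(y)$ (since $m_1 \pi_{n-1}\pi_{n-2}\cdots\pi_i$ and $m_1\pi_i\pi_{i+1}\cdots\pi_{n-1}$ both equal $m_{c}$ for the same ``staircase'' involution $c$, because $m_1$ is fixed by the anti-automorphism $\pi_w\mapsto\pi_{w^{-1}}$). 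Granting this, one computes
\[
\biG = m_1 \bA_1(x_1)\bA_2(x_2)\cdots\bA_{n-1}(x_{n-1}) = m_1 {\widetilde A}^{(\beta)}_1(x_1)\bA_1(x_2)\bA_2(x_3)\cdots\bA_{n-1}(x_{n-1})\cdot(\text{shift})
\]
and then Lemma~\ref{groth-commute-lem2} (with $i=1$, $y=x_1$) peels off the rightmost block $\prod_{j=1}^{n-1}\bh_j(x_j\oplus x_1)$, leaving $m_1$ applied to $\prod_{j=2}^{n-1}\bA_j(x_{j-1})$, i.e.\ the same product on $S_{n-1}$ (in the generators $s_2,\dots,s_{n-1}$) with variables shifted down by one. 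Wait — I should be careful: the index range $\min(i,n-i)$ suggests the blocks peeled off are \emph{triangular}, not rectangular, so the recursion is not simply ``type $A_{n-1}$ reduces to type $A_{n-2}$.'' The correct picture is that applying Lemma~\ref{groth-commute-lem2} once produces one factor $\bh_j(x_j\oplus x_1)$ for each $j$ from $1$ to $n-1$, and iterating on the residual product $\prod_{j=2}^{n-1}\bA_j(\cdot)$ gives, at stage $k$, factors $\bh_j(x_{\bullet}\oplus x_{\bullet})$ for $j=k,\dots,n-1$ — but once the ``inner'' staircase shrinks past the midpoint, the upper limit of $j$ in each new block stops growing. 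Reconciling the bookkeeping so that the exponent of the variable attached to $\bh_{i+j-1}$ comes out as $x_{i\oplus j}$ with $j$ running from $1$ to $\min(i,n-i)$ is exactly the combinatorial heart of the argument.

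Concretely, I would proceed as follows. First, establish the auxiliary identity $m_1\bA_i(y)\bA_{i+1}(x_{i+1})\cdots\bA_{n-1}(x_{n-1}) = m_1 {\widetilde A}^{(\beta)}_i(y)\bA_i(x_{i+1})\cdots\bA_{n-1}(x_{n-1})$ using only $m_1\pi_w = m_1\pi_{w^{-1}}$ (valid because $m_1$ is the identity coset vector and the $\cH$-action on $m_1$ factors through the involution module, where the relevant elements are already self-inverse on the nose — or, more safely, cite the anti-automorphism trick as in Lemma~\ref{groth-commute-lem2}). Second, combine this with Lemma~\ref{groth-commute-lem2} to get a one-step reduction
\[
m_1\bA_1(x_1)\cdots\bA_{n-1}(x_{n-1}) = m_1\Bigl(\textstyle\prod_{j=2}^{n-1}\bA_j(x_{j-1})\Bigr)\cdot\prod_{j=1}^{n-1}\bh_j(x_j\oplus x_1),
\]
and then recognize $m_1\prod_{j=2}^{n-1}\bA_j(x_{j-1})$ as the ``same'' object for $S_n$ with generator set $\{s_2,\dots,s_{n-1}\}$ and variables reindexed. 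Third, set up the induction on $n$: the inductive hypothesis gives the product formula for this smaller object, and I would verify that gluing the peeled-off block $\prod_{j=1}^{n-1}\bh_j(x_j\oplus x_1)$ onto it reproduces exactly the claimed double product $\prod_{i=1}^{n-1}\prod_{j=\min(i,n-i)}^1\bh_{i+j-1}(x_{i\oplus j})$. This last verification is where the $\min(i,n-i)$ cutoff must be checked against the recursion — the peeled block always has $n-1$ factors $\bh_1,\dots,\bh_{n-1}$, but when these are reindexed by the substitution used in the inductive step they land in the $j = $ (appropriate) slice, and the two halves of the $\min$ correspond to the regime before vs.\ after the inner staircase has been fully consumed.

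For part (ii), the argument is identical except that $\mFPF$ annihilates $\pi_i$ when the relevant transposition would try to ``open'' a $2$-cycle of $\bFPF$ — concretely $\mFPF\pi_1 = 0$-type phenomena force the bottom factor $\bh_{i+j-1}$ with the smallest index to drop out, which is exactly why the outer product starts at $i=2$ and the inner cutoff becomes $\min(i-1,n-i)$. I would phrase this by noting $\mFPF\bA_1(x_1) = \mFPF\cdot\bigl(\text{product omitting }\bh_1\bigr)$ because $\mFPF\pi_1$ vanishes in the appropriate sense (or, if $n$ is even, $\mFPF$ is the $\bFPF$-coset vector and the relevant reduction kills one more generator than in the involution case), and then run the same induction with $n$ decreased by $2$ at each stage rather than $1$ — reflecting that the symplectic/fixed-point-free staircase has both its left and right edges one shorter.

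The main obstacle I anticipate is not any single algebraic identity — each of $m_1\pi_w=m_1\pi_{w^{-1}}$, Lemma~\ref{groth-commute-lem2}, and $\bh_i(x)\bh_i(y)=\bh_i(x\oplus y)$ is already in hand — but rather getting the index bookkeeping in the induction to land precisely on the asymmetric ranges $j\le\min(i,n-i)$ and $j\le\min(i-1,n-i)$. I would do one small case ($n=4$, say) explicitly to pin down the reindexing convention before writing the general inductive step, and I expect that once the substitution $x_j\mapsto x_{j-1}$ (together with the relabeling $s_j\mapsto s_{j-1}$) is tracked carefully through one reduction, the $\min$ appears automatically from the fact that the inner product $\prod_{j=\min(i,n-i)}^1$ in the hypothesis for the smaller group, shifted, tiles against the freshly peeled block $\prod_{j=1}^{n-1}\bh_j(\cdot)$ without overlap precisely up to the midpoint and with overlap (hence truncation) beyond it.
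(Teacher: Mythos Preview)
Your overall strategy matches the paper's: convert $m_1\bA_1(x_1)$ to $m_1{\widetilde A}^{(\beta)}_1(x_1)$ using a symmetry of the $\IdInvol$-action, apply Lemma~\ref{groth-commute-lem2}, and iterate. But your displayed one-step reduction
\[
m_1\bA_1(x_1)\cdots\bA_{n-1}(x_{n-1}) \;=\; m_1\Bigl(\prod_{j=2}^{n-1}\bA_j(x_{j-1})\Bigr)\cdot\prod_{j=1}^{n-1}\bh_j(x_j\oplus x_1)
\]
is wrong, and the error is not just bookkeeping. After replacing $\bA_1(x_1)$ by ${\widetilde A}^{(\beta)}_1(x_1)$ you no longer have a leading $\bA_1$ factor, so you cannot invoke Lemma~\ref{groth-commute-lem2} with $i=1$. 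The paper instead writes ${\widetilde A}^{(\beta)}_1(x_1)=\bh_1(x_1)\,{\widetilde A}^{(\beta)}_2(x_1)$, applies the lemma with $i=2$, and then commutes the stray $\bh_1(x_1)$ past the residual $\prod_{j\geq 3}\bA_j(x_{j-1})$. The correct output is
\[
m_1\Bigl(\prod_{j=3}^{n-1}\bA_j(x_{j-1})\Bigr)\cdot \bh_1(x_1)\prod_{j=2}^{n-1}\bh_j(x_j\oplus x_1),
\]
with residual starting at $j=3$ (so induction drops $n$ by $2$, not $1$) and with diagonal factor $\bh_1(x_1)$, \emph{not} $\bh_1(x_1\oplus x_1)$. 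This is precisely why the statement needs the convention $x_{i\oplus i}:=x_i$ on the diagonal, and why the total number of $\bh$-factors is $\sum_{i=1}^{n-1}\min(i,n-i)=\lfloor n^2/4\rfloor$ rather than the $\binom{n}{2}$ your reduction would produce. If you try $n=4$ with your formula you will get five $\bh$-factors instead of four, and the wrong argument in $\bh_1$.

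Two smaller points. First, the paper's module identity is the elementary commutation $m_1\bh_i(x)\bh_j(y)=m_1\bh_j(y)\bh_i(x)$ for all $i,j$ (checked directly from $m_1\pi_i\pi_{i+1}=m_{s_{i+1}s_is_{i+1}}=m_1\pi_{i+1}\pi_i$), which immediately gives $m_1\bA_1(x)=m_1{\widetilde A}^{(\beta)}_1(x)$; your more global claim $m_1\pi_w=m_1\pi_{w^{-1}}$ is true but needs more justification than you give. Second, for part (ii) your instinct is right: the paper shows $\mFPF\bA_i(x)=\mFPF{\widetilde A}^{(\beta)}_{i+1}(x)$ for odd $i$ (using $\mFPF\bh_i(x)=\mFPF$ when $i$ is odd and a reflection identity when $i$ is even), and then the same $i=2$ application of Lemma~\ref{groth-commute-lem2} runs with the index shifted up by one, which is exactly the source of the $\min(i-1,n-i)$ range.
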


In part (ii), the indices $i$ and $j$ in the products always satisfy $i>j >0$ so $x_{i\oplus j} = x_i\oplus x_j$.

\begin{proof}
   We first prove part (i). The result is trivial when $n=1$ so assume $n\geq 2$. For any $1 \leq i <n$ we have 
   $
   m_1\pi_i \pi_{i+1} = m_{s_{i+1}s_i s_{i+1}} = m_1 \pi_{i+1}\pi_{i}
   $ and consequently
$
   m_1 \bh_i(x)\bh_{j}(y) = m_1\bh_{j}(y)\bh_i(x) 
  $ for all integers $i$, $j$ and $x,y \in R$. Using this, one checks that
$
   m_1 \bA_1(x) = m_1 {\widetilde A}^{(\beta)}_1(x)
$, whence 
 \[\ba
        \biG &= m_1 \bA_1(x_1)\bA_2(x_2)\cdots \bA_{n-1}(x_{n-1}) 
        = m_1{\widetilde A}^{(\beta)}_1(x_1)\bA_2(x_2)\cdots \bA_{n-1}(x_{n-1})\\
        & = m_1 \bh_1(x_1) {\widetilde A}^{(\beta)}_2(x_1)\bA_2(x_2) \cdots \bA_{n-1}(x_{n-1}).
\ea
\]
Applying Lemma~\ref{groth-commute-lem2} with $i=2$ and commuting $\bh_1(x_1)$ all the way to the right 
gives
\[ \biG =m_1\bA_3(x_2)\bA_4(x_3)\cdots \bA_{n-1}(x_{n-2})  \bh_1(x_{1\oplus 1})\bh_2(x_{1\oplus 2}) \cdots \bh_{n-1}(x_{1\oplus (n-1)}) .\]
We may assume by induction that 
\[\ba m_1\bA_3(x_2)\bA_4(x_3) \cdots \bA_{n-1}(x_{n-2}) 
&= m_1\prod_{i=1}^{n-3} \prod_{j=\min(i,n-2-i)}^1 \bh_{i+j+1}(x_{(i+1)\oplus (j+1)})
\\&= m_1\prod_{i=2}^{n-2} \prod_{j=\min(i,n-i)}^2 \bh_{i+j-1}(x_{i\oplus j}).
\ea\]
This gives
$
    \biG =    m_1\prod_{i=2}^{n-2} \prod_{j=\min(i,n-i)}^2 \bh_{i+j-1}(x_{i\oplus j}) \cdot \prod_{k=1}^{n-1} \bh_k(x_{1 \oplus i})
$,
and
    it is not hard to see that this formula can be transformed by appropriate commutations to 
    the expression in part (i).
    For instance, if $n = 8$ then what needs to be shown is 
    equivalent to the claim that one can turn the reduced word
  $
        3\cdot 54 \cdot 765 \cdot 76 \cdot 7\cdot 1234567
$ into 
$
 1 \cdot 32 \cdot 543 \cdot 7654 \cdot 765 \cdot 76 \cdot 7$
using only relations of the form $ij \leftrightarrow ji$ for $|i-j|>1$.

The proof of part (ii) is similar.
Assume $n$ is even and $1 \leq i <n$. If $i$ is odd then 
$ \mFPF \pi_i = 0 $ and $\mFPF \bh_i(x) = \mFPF$ for all $x \in R$.
On the other hand, if $i$ is even and $x,y\in R$ then
\[   
    \mFPF\pi_i \pi_{i+1}=  \mFPF\pi_i \pi_{i-1}
    \quand
   \mFPF \bh_i(x)\bh_{i+1}(y) = \mFPF \bh_i(x)\bh_{i-1}(y).
\]    
 Using these relations repeatedly we deduce that
$
   \mFPF \bA_i(x) = \mFPF {\widetilde A}^{(\beta)}_{i+1}(x)
$
for any odd integer $1\leq i < n$.
   By Lemma~\ref{groth-commute-lem2}, we therefore have
 \[\ba
        \biGfpf &= \mFPF A_1(x_1)A_2(x_2)\cdots A_{n-1}(x_{n-1}) =
         \mFPF {\widetilde A}^{(\beta)}_2(x_1)\bA_2(x_2) \cdots \bA_{n-1}(x_{n-1}) \\
         &=
         \mFPF \bA_3(x_2)\bA_4(x_3)\cdots \bA_{n-1}(x_{n-2}) \cdot \bh_2(x_1\oplus x_2) \bh_3(x_1\oplus x_3) \cdots \bh_{n-1}(x_1\oplus x_{n-1}).
\ea
\]
From here, the result follows by induction as in the proof of part (i).
\end{proof}

Let $\ltriang_n := \{(i,j) \in \ZZ \times \ZZ : i \geq j>0\}$ and 
$ \ltriangneq_n :=  \{(i,j) \in \ZZ \times \ZZ : i > j>0\}$.
Equip these sets with the total order defined by $(i,j) \prec (k,l)$ if $i<k$ or if $i=k$ and $j > l$. 
An \emph{involution Hecke pipe dream} for $z \in \bI$
(respectively, $z \in \bIfpf$)
 is a finite subset $D$ of $\ltriang_n$ (respectively, $\ltriangneq_n$)
 such that the word formed by listing the numbers $i+j-1$ as $(i,j)$ runs over $D$ in the order $\prec$
 belongs to $\InvolHecke(z)$.
We write $\IP(z)$ for the set of these subsets.

\begin{theorem} \label{thm:inv-pipe-dream-formula} 
If $z \in \bI$ or if $n$ is even and $z \in \bIfpf$ then
\[
 \biG_z = \sum_{D \in \IP(z)} \beta^{|D| -\ellhat(z)} \prod_{(i,j) \in D} x_{i\oplus j}
\]
where we set $x_{i\oplus i} := x_i$ for $i>0$ and $x_{i\oplus j}  := x_i + x_j + \beta x_ix_j$ for $i>j>0$.
\end{theorem}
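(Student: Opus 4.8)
The plan is to read the formula off the explicit product provided by Proposition~\ref{prop:iSformula}. I would handle the two cases in parallel: for $z\in\bI$ work with $\biG = m_1\fkG\in\IdInvol$ and Proposition~\ref{prop:iSformula}(i), and for $n$ even and $z\in\bIfpf$ work with $\biGfpf = \mFPF\fkG\in\IdFixed$ and Proposition~\ref{prop:iSformula}(ii). The first step is to recognize the product appearing there as being indexed exactly by the pairs in $\ltriang_n$ (resp.\ $\ltriangneq_n$) subject to $i+j\leq n$, taken in the total order $\prec$. Indeed, in part (i) the outer index $i$ increases through $1,2,\dots,n-1$ while for each fixed $i$ the inner index $j$ decreases through $\min(i,n-i),\dots,2,1$, and $(i,j)\prec(k,l)$ means $i<k$ or else $i=k$ and $j>l$; moreover $1\leq j\leq\min(i,n-i)$ is equivalent to $i\geq j\geq 1$ together with $i+j\leq n$. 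The constraint $i+j\leq n$ is automatic relative to the definition of $\IP(z)$, since the requirement there that the word of $D$ lie in $\InvolHecke(z)$ already forces every letter $i+j-1$ of that word to lie in $\{1,\dots,n-1\}$. Thus Proposition~\ref{prop:iSformula} says $\biG = m_1\prod_{(i,j)}\bh_{i+j-1}(x_{i\oplus j})$ and $\biGfpf = \mFPF\prod_{(i,j)}\bh_{i+j-1}(x_{i\oplus j})$, the products ranging over the stated pairs in $\prec$-increasing order.

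Second, I would expand $\prod_{(i,j)}\bh_{i+j-1}(x_{i\oplus j}) = \prod_{(i,j)}(1+x_{i\oplus j}\pi_{i+j-1})$ by distributivity. Choosing the $\pi$-term from the factors indexed by a finite subset $D$ and the $1$-term from all others contributes to $\biG$ the summand
\[
\Bigl(\prod_{(i,j)\in D}x_{i\oplus j}\Bigr)\cdot m_1\pi_{a_1}\pi_{a_2}\cdots\pi_{a_{|D|}},
\]
where $a_1a_2\cdots a_{|D|}$ is obtained by listing $i+j-1$ as $(i,j)$ runs over $D$ in the order $\prec$ --- exactly the word attached to $D$ in the definition of an involution Hecke pipe dream (with $\mFPF$ replacing $m_1$ in the fixed-point-free case). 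By the $\cH$-module structure on $\IdInvol$ (resp.\ $\IdFixed$), the element $m_1\pi_{a_1}\cdots\pi_{a_{|D|}}$ equals $\beta^{N_D}m_{z_D}$ for a unique $z_D$ and a unique integer $N_D\geq 0$, and $D\in\IP(z)$ holds precisely when $a_1\cdots a_{|D|}\in\InvolHecke(z)$, i.e.\ precisely when $z_D=z$. Collecting the coefficient of $m_z$ therefore gives
\[
\biG_z = \sum_{D\in\IP(z)}\beta^{N_D}\prod_{(i,j)\in D}x_{i\oplus j}.
\]

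It then remains to check that $N_D = |D|-\ellhat(z)$ for each $D\in\IP(z)$, which is the only point requiring genuine care. This follows from the decomposition \eqref{big-atoms-eq} together with the defining property of the Hecke atoms $\A(z)$ (see \cite[\S2.1]{Marberg2019a}): if $a=a_1\cdots a_p\in\InvolHecke(z)$ lies in $\Hecke(w)$ for some $w\in\A(z)$, then $\pi_{a_1}\cdots\pi_{a_p}=\beta^{p-\ell(w)}\pi_w$ in $\cH(S_n)$, while $m_1\pi_w=\beta^{\ell(w)-\ellhat(z)}m_z$ (resp.\ $\mFPF\pi_w=\beta^{\ell(w)-\ellhat(z)}m_z$), so that $m_1\pi_{a_1}\cdots\pi_{a_p}=\beta^{p-\ellhat(z)}m_z$; taking $p=|D|$ gives $N_D=|D|-\ellhat(z)$. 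Substituting this into the previous display yields the theorem. The substantive ingredient is Proposition~\ref{prop:iSformula}, whose proof via Lemma~\ref{groth-commute-lem2} and the commutation relations for $m_1$ and $\mFPF$ has already been carried out; given that product form, the difficulty remaining is modest, amounting to the bookkeeping of the $\prec$-order in the first step and of the powers of $\beta$ in the last.
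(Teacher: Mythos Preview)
Your proof is correct and follows the same approach as the paper's: both expand the product from Proposition~\ref{prop:iSformula} and read off the coefficient of $m_z$. The only cosmetic difference is that the paper parametrizes the terms in the expansion by pairs $(a,\bfi)$ consisting of a word $a\in\InvolHecke(z)$ and a constrained compatible sequence $\bfi$, and then bijects these pairs with pipe dreams via $D=\{(i_j,a_j-i_j+1)\}$, whereas you index the expansion directly by subsets $D$; these are equivalent bookkeepings of the same computation.
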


When $\beta=0$ our result reduces to \cite[Thm. 1.5]{HMP6}, 
which was proved in a different way using somewhat involved recurrences.
The methods here give a new and arguably simpler proof.
For generic $\beta$, Theorem~\ref{thm:inv-pipe-dream-formula}
resolves the symplectic half of \cite[Problem 6.9]{HMP6}.

\begin{proof} First assume $z \in \bI$. Part (i) of Proposition~\ref{prop:iSformula} implies
\[
        \biG_z = \sum_{a=a_1\cdots a_N \in \InvolHecke(z)} \beta^{N -\ellhat(z)}\sum_{
       \substack{ 0 < \bfi=(i_1 \leq \dots \leq i_N) \in \CS(a) \\
         i_j \leq a_j < 2i_j\ \forall j}} x_{i_1\oplus(a_1-i_1+1)}\cdots x_{i_N\oplus(a_N-i_N+1)}.
\]
    One now checks that the map sending $(a,\bfi) $ to $D=\{(i_j,a_j-i_j+1) : 1 \leq j \leq \ell(a)\}$ is a bijection from the pairs indexing this double summation to 
    the elements of $\IP(z)$.
When $n$ is even and $z \in \bIfpf$, the same argument using part (ii) of Proposition~\ref{prop:iSformula}
gives the desired formula.
\end{proof}

\begin{example}
Suppose $n=4$. If $y = s_3s_2s_3 =s_2s_3s_2 = (2,4) \in \bI$ as in
  Example~\ref{ex:inv-hecke}, then the elements of $\IP(y)$ are the sets of nonzero positions in the matrices
  \begin{equation*} \small
  \begin{bmatrix} 0 & 0 & 0 \\ 1 & 1 & 0 \\ 0 & 0 & 0 \end{bmatrix}, \quad 
  \begin{bmatrix} 0 & 0 & 0 \\ 1 & 0 & 0 \\ 1 & 0 & 0 \end{bmatrix}, \quad 
  \begin{bmatrix} 0 & 0 & 0 \\ 1 & 1 & 0 \\ 1 & 0 & 0 \end{bmatrix},
  \end{equation*}
which are explicitly $\{(2,1),(2,2)\}$, $\{(2,1),(3,1)\}$, and $\{(2,1),(2,2),(3,1)\}$. By Theorem~\ref{thm:inv-pipe-dream-formula},
  \begin{equation*}
    \biG_{y} = (x_2 \oplus x_1)x_2 + (x_2 \oplus x_1)(x_3 \oplus x_1) + \beta(x_2 \oplus x_1)x_2 (x_3 \oplus x_1).
  \end{equation*}
Alternatively, if $z = s_3\cdot s_2\cdot \bFPF \cdot s_2\cdot s_3 \in \bIfpf$ as in Example~\ref{ex:inv-hecke}, 
  then $\IP(z)$ contains just one element $\{(2,1),(3,1)\}$, and
  Theorem~\ref{thm:inv-pipe-dream-formula} asserts
that
  $\biG_z = (x_2 \oplus x_1)(x_3 \oplus x_1)$.
\end{example}

\end{document}